\numberwithin{equation}{section}
\theoremstyle{plain}
\newtheorem{thm}{Theorem}[section]
\newtheorem{prop}[thm]{Proposition}
\newtheorem{cor}[thm]{Corollary}
\newtheorem{lemma}[thm]{Lemma}
\newtheorem{remark}[thm]{Remark}
\begin{document}
\title[Landau equation]
{Analytic smoothing effect of the time variable\\
for the spatially homogeneous Landau equation
}

\author[C.-J. Xu \& Y. Xu]
{Chao-Jiang Xu and Yan Xu}

\address{Chao-Jiang Xu and Yan Xu
\newline\indent
College of Mathematics and Key Laboratory of Mathematical MIIT,
\newline\indent
Nanjing University of Aeronautics and Astronautics, Nanjing 210016, China
}
\email{xuchaojiang@nuaa.edu.cn; xuyan1@nuaa.edu.cn}

\date{\today}

\subjclass[2010]{35B65,76P05,82C40}

\keywords{Spatially homogeneous Landau equation, analytic smoothing effect, hard potentials}

\begin{abstract}
In this work, we study the Cauchy problem of the spatially homogeneous Landau equation with hard potentials in a close-to-quilibrium framework. 
We prove that the solution to the Cauchy problem enjoys the analytic regularizing effect of the time variable with an $L^2$ initial datum for positive time. So that the smoothing effect of Cauchy problem for the spatially homogeneous Landau equation with hard potentials is exactly same as heat equation.
\end{abstract}

\maketitle

\section{Introduction}
\par In this work, we are concerned the following Cauchy problem of spatially homogenous Landau equation
\begin{equation}\label{1-1}
\left\{
\begin{aligned}
 &\partial_t F=Q(F,F),\\
 &F|_{t=0}=F_0,
\end{aligned}
\right.
\end{equation}
where $F=F(t,v)\ge0$ is the density distribution function at time $t\ge0$, with the velocity variable $v\in\mathbb R^3$. The Landau bilinear collision operator is defined by
$$Q(G,F)(v)=\sum_{i,j=1}^3\partial_i\bigg(\int_{\mathbb R^3}a_{ij}(v-v_*)[G(v_*)\partial_jF(v)-\partial_jG(v_*)F(v)]dv_*\bigg),$$
where
\begin{displaymath}
   a_{ij}(v)=(\delta_{ij}|v|^2-v_iv_j)|v|^\gamma,\quad \gamma\ge-3,
\end{displaymath}
is a symmetric non-negative matrix such that $a_{ij}(v)v_iv_j=0$. Here, $\gamma$ is a parameter which leads to the classification of the hard potential if $\gamma>0$, Maxwellian molecules if $\gamma=0$, soft potential if $\gamma\in]-3, 0[$ and Coulombian potential if $\gamma=-3$.

The Landau equation was introduced as a limit of the Boltzmann equation when the collisions become grazing in~\cite{D-3,V-2}. The global existence, uniqueness of classical solutions for the spatially homogeneous Landau equation with hard potentials, regularizing effects and large-time behavior have been addressed by Desvillettes and Villani~\cite{D-1,V-1}. Moreover, they proved the smoothness of the solution in $C^\infty(]0,\infty[; \mathcal{S}(\mathbb R^3))$. Carrapatoso~\cite{C-5} proved an exponential in time convergence to the equilibrium. In~\cite{C-1}, the authors proved the solution is analytic of $v$ variables for any $t>0$  and the Gevrey regularity in \cite{C-2,C-3}.

Let $\mu$ be the Maxwellian distribution
$$
\mu(v)=(2\pi)^{-\frac{3}{2}}e^{-\frac{|v|^2}{2}},
$$
we shall linearize the Landau equation \eqref{1-1} around $\mu$ with the fluctuation of the density distribution function
\begin{equation*}
F(t,v)=\mu(v)+\sqrt\mu(v)f(t,v),
\end{equation*}
since $Q(\mu,\mu)=0$, the Cauchy problem \eqref{1-1} for $f=f(t,v)$ takes the form
\begin{equation}\label{1-2}
\left\{
\begin{aligned}
 &\partial_t f+\mathcal{L}(f)=\Gamma(f,f),\\
 &f|_{t=0}=f_0,
\end{aligned}
\right.
\end{equation}
with $F_0(v)=\mu+\sqrt\mu f_0(v)$, where
\begin{equation*}
   \Gamma(g,h)=\mu^{\frac{-1}{2}}Q(\mu^{\frac{1}{2}}g,\mu^{\frac{1}{2}}h),
\end{equation*}
\begin{equation*}
    \mathcal{L}(f)=\mathcal{L}_1f+\mathcal{L}_2f,\quad \mathcal{L}_1f=-\Gamma(\mu^{\frac{1}{2}},f),\quad \mathcal{L}_2f=-\Gamma(f,\mu^{\frac{1}{2}}).
\end{equation*}

\par In the Maxwellian molecules case, Villani~\cite{V-1} has proved a linear functional inequality between entropy and entropy dissipation by constructive methods, from which one deduces an exponential convergence of the solution to the Maxwellian equilibrium in relative entropy, which in turn implies an exponential convergence in $L^1$-distance. In~\cite{D-2}, Desvillettes and Villani have proved a functional inequality for entropy dissipation is not linear, from which one obtains a polynomial in time convergence of solutions towards the equilibrium in relative entropy, which implies the same type of convergence in $L^1$-distance.
In~\cite{L-2}, the authors studied the spatially homogeneous Landau equation and non-cutoff Boltzmann equation in a close-to-quilibrium framework and proved the Gelfand-Shilov smoothing effect(see also~\cite{L-3,L-4}). Guo~\cite{G-1} constructed global classical solutions for the spatially inhomogeneous Landau equation near a global Maxwellian in a periodic box, and the smoothness of the solutions have been studied in~\cite{C-4,L-5,H-2}. The analytic smoothing effect of the velocity variable for the nonlinear Landau equation has been studied in~\cite{L-1,M-1}. The variant regularity results in a close to equilibrium setting were considered by~\cite{S-1,C-6,C-7}.

\par Let us give the definition of analytic function spaces $\mathcal{A}(\Omega)$ where $\Omega\subset \mathbb R^n$ is a open domain. We say that $u\in \mathcal{A}(\Omega)$ if $u\in C^\infty(\Omega)$ and there exists a constant $C$ such that for all multi-indices $\alpha\in\mathbb N^n$,
    \begin{equation*}
        \|\partial^\alpha u\|_{L^\infty(\Omega)}\le C^{|\alpha|+1}\alpha!\ .
    \end{equation*}
Remark that, by using the Sobolev embedding, we can replace the $L^\infty$ norm by the $L^2$ norm , or norm in any Sobolev space in the above definition.

\par In this work, we consider the Cauchy problem \eqref{1-2} with $\gamma\ge0$, show that the solution of the Cauchy problem \eqref{1-2} with initial datum in $L^2(\mathbb R^3)$ enjoys the analytic regularizing effect of time variable. Our main result reads as follow.

\begin{thm}\label{theorem}
    Assume $f_0\in L^{2}(\mathbb R^3)$ and $T>0$, let $f$ be the solution of the Cauchy problem \eqref{1-2}  with $\|f\|_{L^\infty([0,T];L^2(\mathbb R^3))}$ small enough. Then there exists a constant $C>0$ such that for any $k\in \mathbb{N}$, we have
    \begin{equation}\label{1-7}
       \|\partial_t^kf(t)\|_{L^2(\mathbb R^3)}\le \frac{C^{k+1}}{t^{k}}k!,\qquad \forall t\in ]0, T].
    \end{equation}
\end{thm}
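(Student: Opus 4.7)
\emph{Setting up the recursion.} I set $g_k := \partial_t^k f$ for $k \ge 0$. Differentiating \eqref{1-2} $k$ times and applying Leibniz to the bilinear $\Gamma$ yields
\begin{equation*}
\partial_t g_k + \mathcal L g_k = \sum_{j=0}^k \binom{k}{j}\, \Gamma(g_j, g_{k-j}), \qquad k \ge 0,
\end{equation*}
a recursion whose rigorous derivation is legitimized by the $C^\infty$-in-$v$ smoothness of $f$ on $]0, T]$ granted by the works cited in the introduction. The estimate \eqref{1-7} would be proved by induction on $k$; the case $k = 0$ is exactly the smallness assumption on $\|f\|_{L^\infty([0,T];L^2)}$.

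\emph{Functional-analytic tools.} Two classical ingredients for the linearized Landau operator with $\gamma \ge 0$ drive the estimates. First, the coercivity $\langle \mathcal L g, g\rangle \ge c_0 \|g\|_B^2$ on $(\ker \mathcal L)^\perp$, where $\|\cdot\|_B$ denotes the usual anisotropic dissipation norm, satisfying $\|\cdot\|_{L^2} \lesssim \|\cdot\|_B$ for hard potentials. Second, the Landau trilinear estimate $|\langle \Gamma(g,h), w\rangle| \le C\|g\|_{L^2}\|h\|_B\|w\|_B$ (up to symmetrization of the first two arguments). Physical conservation of mass, momentum and energy forces the $L^2$-projection $\mathbf P f(t)$ onto $\ker\mathcal L$ to be $t$-independent, so $\mathbf P g_k \equiv 0$ for every $k \ge 1$ and the coercivity applies unconditionally to every higher $g_k$.

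\emph{Time-weighted induction.} Testing the equation for $g_k$ against $g_k$ produces
\begin{equation*}
\tfrac12 \tfrac{d}{dt}\|g_k\|_{L^2}^2 + c_0 \|g_k\|_B^2 \le C\sum_{j=0}^k \binom{k}{j}\|g_j\|_{L^2}\|g_{k-j}\|_B\|g_k\|_B.
\end{equation*}
Multiplying by $t^{2k}$, integrating over $[0,t]$, and carrying along in the induction the pair
\begin{equation*}
\mathcal N_k(t) := \sup_{0 < s \le t} s^{2k}\|g_k(s)\|_{L^2}^2, \qquad \mathcal E_k(t) := \int_0^t s^{2k}\|g_k(s)\|_B^2\, ds,
\end{equation*}
the plan is to prove jointly that $\mathcal N_k(t), \mathcal E_k(t) \le (A^{k+1}k!)^2$ for a single constant $A$. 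The endpoint indices $j \in \{0, k\}$ introduce factors $\|g_0\|_{L^2}$ or $\|g_0\|_B$, which are absorbed into the dissipation on the left by the smallness of $\|f\|_{L^\infty_t L^2}$ and the ensuing smallness of $\int_0^T \|g_0\|_B^2\, ds$ from the base-case energy estimate. Each middle index $1 \le j \le k-1$ is treated by Cauchy--Schwarz in time together with the inductive hypotheses $\|g_j(s)\|_{L^2} \le A^{j+1}j!/s^j$ and $\mathcal E_{k-j}(t) \le (A^{k-j+1}(k-j)!)^2$; combined with the combinatorial collapse $\binom{k}{j}\, j!\, (k-j)! = k!$, each contribution is bounded by $A^{k+2} k! \sqrt{\mathcal E_k(t)}$. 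The drift $2k\int_0^t s^{2k-1}\|g_k\|_{L^2}^2\, ds$ arising from $\partial_t(t^{2k})$ is absorbed into $c_0 \mathcal E_k(t)$ via $\|g_k\|_{L^2} \lesssim \|g_k\|_B$ and a splitting of the $s$-integral at $s = t/2$.

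\emph{Main obstacle.} The delicate point is the uniform-in-$k$ constant tracking: a naive summation and Young inequality would introduce an extra factor $(k+1)$ and yield only $(k+1)!$-growth, spoiling the analytic rate. Closing the recursion at $k!$ requires either an optimized Cauchy--Schwarz splitting with weights like $1/\sqrt{(j+1)(k-j+1)}$ (whose $\sum_j$ converges uniformly in $k$), or a refined choice of Young weights $\varepsilon_j$ combined with the smallness of $\|f\|_{L^\infty_t L^2}$ to absorb the residual terms. Once the bookkeeping closes, $t^k \|\partial_t^k f(t)\|_{L^2} \le \sqrt{\mathcal N_k(T)} \le A^{k+1} k!$ for every $t \in ]0, T]$, which is exactly \eqref{1-7} with $C = A$.
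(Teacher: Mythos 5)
Your overall skeleton (induction on $k$ with weights $t^k$, energy estimate, coercivity, the trilinear bound, and the combinatorial collapse $\binom{k}{j}j!(k-j)!=k!$) is the same as the paper's, but there are two genuine gaps, and they are exactly the two places where the paper has to work.

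First, the drift term. Your proposed absorption of $2k\int_0^t s^{2k-1}\|g_k\|_{L^2}^2\,ds$ into $c_0\mathcal E_k(t)$ ``via $\|g_k\|_{L^2}\lesssim\|g_k\|_B$ and a splitting at $s=t/2$'' does not work: writing $2ks^{2k-1}=\tfrac{2k}{s}\,s^{2k}$, the prefactor $\tfrac{2k}{s}$ is at least $\tfrac{2k}{t}$ even on $[t/2,t]$, so absorbing it into $c_0\int_0^t s^{2k}\|g_k\|_B^2\,ds$ would need $c_0\gtrsim k/t$, which fails uniformly in $k$; the piece on $[0,t/2]$ is no better. The paper's resolution (its Lemma \ref{lemma4-1}) is structurally different: it substitutes the equation once, $\partial_t^k f=-\mathcal L(\partial_t^{k-1}f)+\partial_t^{k-1}\Gamma(f,f)$, into one factor of $\|\partial_t^k f\|_{L^2}^2$, so the explicit factor $k$ multiplies quantities controlled by the induction hypothesis at level $k-1$, and is then compensated because the inductive bound is strengthened to $(k-2)!$ rather than $k!$ (so $k\cdot(k-3)!\le 3(k-2)!$). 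You need some version of this order-reduction; the dissipation alone cannot eat the drift.

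Second, the closing of the Leibniz sum at analytic rate. You correctly identify that a naive summation loses a factor $k$, but you only gesture at fixes (weights $1/\sqrt{(j+1)(k-j+1)}$, tuned Young exponents) without carrying one out, and with the hypothesis $\|g_j\|\le A^{j+1}j!/s^j$ these do not by themselves produce a convergent $j$-sum. The paper's fix is again the strengthened induction $\|\tau^k\partial_\tau^k f\|\lesssim B^{k-1}(k-2)!$: the two extra inverse powers of $k$ turn the Leibniz sum into $\sum_{j}\frac{k(k-1)}{j(j-1)(k-j-1)(k-j-2)}$, which is bounded by an absolute constant (their inequality \eqref{summation}). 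Without downgrading $k!$ to $(k-2)!$ (or an equivalent $k!/(k+1)^2$-type weight) in the quantity you induct on, neither the drift term nor the bilinear sum closes. Your observation that $\mathbf P\partial_t^k f=0$ for $k\ge1$ is a legitimate alternative to the paper's use of Lemma \ref{lemma2} plus interpolation, but it does not address either of the above difficulties.
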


\begin{remark}
    \rm In the paper~\cite{L-1}, for $f_0\in L^{2}(\mathbb R^3)$ with $\|f\|_{L^\infty([0,T];L^2(\mathbb R^3))}\le\epsilon$
    small enough, the solution of the Cauchy problem \eqref{1-2} satisfies $f(t)\in \mathcal{A}(\mathbb{R}^3)$ for all
    $0< t\le T$, i. e. there exists a constant $C>0$ such that
    \begin{equation*}
         \|t^{\frac{|\alpha|}{2}}\partial_v^\alpha f(t)\|_{L^2(\mathbb R^3)}\le C^{|\alpha|+1}\alpha!,\qquad \forall \alpha\in\mathbb{N}^3, \ \ \forall t\in ]0, T],
    \end{equation*}
which implies that $f\in C^\infty(]0, T[;  \mathcal{A}(\mathbb{R}^3))$, so that we prove only the estimate \eqref{1-7} for the smooth solution of \eqref{1-2}.
Combine with the results of ~\cite{L-1}, we have proved that, if $f$ is the solution of the nonlinear Cauchy problem \eqref{1-2} with $\|f\|_{L^\infty([0,T];L^2(\mathbb R^3))}$ small enough, then we have
$$
f\in \mathcal{A}(]0, T[\times\mathbb{R}^3),
$$
which implies that, the smoothing effect properties of Cauchy problem for the spatially homogeneous Landau equation with hard potentials is exactly same as semilinear heat equation.
\end{remark}

\section{Analysis of Landau collision operator}
The operators $\mathcal{L}_1, \mathcal{L}_2$ and $\Gamma$ are defined in~\cite{G-1} as follow:
\begin{equation}\label{1-3}
    \mathcal{L}_1f=-\sum_{i,j=1}^3\left\{\partial_i[(a_{ij}*\mu)\partial_jf]+(a_{ij}*\mu)\frac{v_i}{2}\frac{v_j}{2}f-\partial_i\left[(a_{ij}*\mu)\frac{v_j}{2}\right]f\right\},
\end{equation}
\begin{equation*}\label{1-4}
    \mathcal{L}_2f=-\sum_{i,j=1}^3\mu^{-\frac{1}{2}}\partial_i\left\{\mu\left[a_{ij}*\left(\mu^{\frac{1}{2}}\partial_jf+\mu^{\frac{1}{2}}\frac{v_j}{2}f\right)\right]\right\},
\end{equation*}
\begin{equation*}\label{1-5}
    \begin{split}
       \Gamma(f,g)&=\sum_{i,j=1}^3\bigg\{\partial_i[(a_{ij}*(\mu^{\frac{1}{2}}f))\partial_jg]-\left[a_{ij}*\left(\frac{v_i}{2}\mu^{\frac{1}{2}}f\right)\right]\partial_jg\\
       &\quad-\partial_i[(a_{ij}*(\mu^{\frac{1}{2}}\partial_jf))g]+\left[a_{ij}*\left(\frac{v_i}{2}\mu^{\frac{1}{2}}\partial_jf\right)\right]g\bigg\}.
    \end{split}
\end{equation*}
Since the use of a different normalization for the Maxwellian, these representations are different in a few places by a factor of $\frac{1}{2}$ from those in~\cite{G-1}.
The linear operator $\mathcal{L}$ is nonnegative.

For later use, we derive some results for the linear operator $\mathcal L$. For simplicity, with $s\in\mathbb R$, we define
\begin{equation*}
    \|f\|_{p,s}=\|(1+|\cdot|)^{s} f\|_{L^p(\mathbb R^3)},1\le p\le\infty,
\end{equation*}
and
\begin{equation*}
   \|f\|^2_{L^2_A}=\sum_{i,j=1}^3\int_{\mathbb R^3}\left(\bar a_{ij}\partial_{i}f\partial_{j}f+\bar a_{ij}\frac{1}{4}v_iv_jf^2\right)dv,
\end{equation*}
where $\bar a_{ij}=a_{ij}*\mu$.
\par From Corollary 1 in~\cite{G-1}, there exists $C_1>0$ such that
\begin{equation*}
    \|f\|_{L^2_A}^2\ge C_1(\|\mathbf P_v\bigtriangledown f\|_{2,\gamma/2}^2+\|(\mathbf I-\mathbf P_v)\bigtriangledown f\|_{2,1+\gamma/2}^2+\|f\|_{2,1+\gamma/2}^2),
\end{equation*}
where for any vector-valued function $g=(g_1,g_2,g_3)$, define the projection to the vector $v\in\mathbb R^3$ as
$$
(\mathbf P_vg)_i=\sum_{j=1}^3g_jv_j\frac{v_i}{|v|^2},\quad1\le i\le3.
$$
Noticing that $f=\mathbf P_v\bigtriangledown f+(\mathbf I-\mathbf P_v)\bigtriangledown f$, we have
\begin{equation}\label{2-1}
    \|f\|_{L^2_A}\ge C_1(\|\bigtriangledown f\|_{2,\gamma/2}+\|f\|_{2,1+\gamma/2}).
\end{equation}
From representation \eqref{1-3}, we can get the coercivity of the operator $\mathcal L_1$.
\begin{lemma}\label{lemma2}
   Let $f\in \mathcal{S}(\mathbb R^3)$, then there exists a constant $C_{2}>0$ such that
   $$
   (\mathcal{L}_1f,f)_{L^2}\ge\|f\|_{L^2_A}^2-C_{2}\|f\|_{2,\gamma/2}^2.
   $$
\end{lemma}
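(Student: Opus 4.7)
The plan is to compute $(\mathcal L_1 f,f)_{L^2}$ directly from the representation \eqref{1-3} by integration by parts, isolate the piece that reconstructs $\|f\|_{L^2_A}^2$, and then show that the remaining zeroth-order multiplier is controlled by $(1+|v|)^{\gamma}$ (not $(1+|v|)^{\gamma+2}$) after a delicate cancellation of leading-order growth.

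First I would test each of the three summands of $\mathcal L_1 f$ against $f$. The divergence term $-\sum_{i,j}\partial_i[\bar a_{ij}\partial_j f]$ gives, after one integration by parts, the quadratic form $\sum_{i,j}\int \bar a_{ij}\partial_i f\,\partial_j f\,dv$; the term $-\sum_{i,j}\bar a_{ij}\tfrac{v_iv_j}{4}f$ gives $-\tfrac14\sum_{i,j}\int \bar a_{ij}v_iv_j f^2\,dv$; the third term either integrates by parts onto $f^2$ or is expanded by the Leibniz rule, producing a multiplication operator of the form $\tfrac12\!\sum_{i,j}\partial_i[\bar a_{ij}v_j]\,f$. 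Writing $\bar b_j:=\sum_i\partial_i\bar a_{ij}$, this last piece reads $\tfrac12[\mathrm{tr}(\bar a)+\sum_j v_j\bar b_j]f$. Combining the three contributions one arrives at a representation
\begin{equation*}
(\mathcal L_1 f,f)_{L^2}=\|f\|_{L^2_A}^2+\tfrac12\int_{\mathbb R^3} M(v)\,f^2\,dv,
\end{equation*}
where $M(v)$ is an explicit combination of $\bar a_{ij}v_iv_j$, $\sum_j v_j\bar b_j$, and $\mathrm{tr}(\bar a)$ (up to signs fixed by the previous step).

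The crucial step is to show that $|M(v)|\le C(1+|v|)^\gamma$, which is what produces the weight $\|f\|_{2,\gamma/2}^2$ on the right. Each of $\bar a_{ij}v_iv_j$, $v_j\bar b_j$, and $\mathrm{tr}(\bar a)$ individually grows like $|v|^{\gamma+2}$ at infinity, so the desired bound depends on a cancellation of leading terms. To prove this I would exploit the algebraic identities $a_{ij}(w)w_j=0$ and $\sum_i\partial_i a_{ij}(w)=-2w_j|w|^\gamma$, together with the substitution $w=v-v_*$ inside the convolutions against $\mu$. Writing $\sum a_{ij}(w)v_iv_j=\sum a_{ij}(w)v_{*i}v_{*j}$ (by the orthogonality identity applied to $v=w+v_*$), the leading asymptotics $|v|^{\gamma+2}$ of the three pieces can be computed explicitly from moments of $\mu$, and they cancel to leading order, leaving only terms of size $(1+|v|)^\gamma$. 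Standard pointwise convolution estimates for $a_{ij}*\mu$, $b_j*\mu$ (the same ones used in Guo \cite{G-1} to derive \eqref{2-1}) then give $|M(v)|\le C(1+|v|)^\gamma$.

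The main obstacle is precisely this cancellation in Step 3: a naive bound $|M(v)|\lesssim (1+|v|)^{\gamma+2}$ is not good enough, since the resulting lower bound on $(\mathcal L_1 f,f)_{L^2}$ would only be $\|f\|_{L^2_A}^2-C\|f\|_{2,1+\gamma/2}^2$, which is not useful because the corrective weight matches the weight present in $\|f\|_{L^2_A}^2$ itself. Once the leading polynomial growth is shown to cancel (thanks to the defining identity of $a_{ij}$ and the first two moments of $\mu$), the remaining estimate $\tfrac12\int M(v)f^2\,dv\ge -C_2\|f\|_{2,\gamma/2}^2$ is immediate and the lemma follows.
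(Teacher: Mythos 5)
Your proposal follows essentially the same route as the paper: integrate by parts to reconstruct $\|f\|_{L^2_A}^2$, then bound the residual zeroth-order multiplier by $(1+|v|)^{\gamma}$ using the identity $\sum_j a_{ij}(w)w_j=0$ together with the vanishing first moment of $\mu$ (the paper implements this by swapping $v_j$ for $v'_j$ inside the convolution and Taylor-expanding $\partial_i a_{ij}(v-v')$, which lands on second derivatives of $a_{ij}$ of size $(1+|v|)^{\gamma}$). One bookkeeping correction: the term $\bar a_{ij}v_iv_j$ should not appear in your multiplier $M$ at all, since it is exactly the potential part of $\|f\|_{L^2_A}^2$; the residual is only $\tfrac12\sum_{i,j}\partial_i[\bar a_{ij}v_j]=\tfrac12\bigl(\mathrm{tr}\,\bar a+\sum_j v_j\bar b_j\bigr)$, whose leading $|v|^{\gamma+2}$ asymptotics do cancel as you describe (whereas a leftover $\bar a_{ij}v_iv_j$ would contribute a non-cancelling $2|v|^{\gamma+2}$ and the argument would fail).
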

\begin{proof}
    By the representation \eqref{1-3} and integrating by parts, we have
    \begin{displaymath}
        \begin{split}
            (\mathcal{L}_1f,f)_{L^2}&=\sum_{i,j=1}^3\left[((a_{ij}*\mu)\partial_jf,\partial_ig)_{L^2}
            +\frac{1}{4}((a_{ij}*\mu)v_iv_jf,g)_{L^2}\right]\\
            &\qquad\qquad\qquad-\frac{1}{2}\sum_{i,j=1}^3(\partial_i[(a_{ij}*\mu)v_j]f,g)_{L^2}\\
            &=\|f\|_{L^2_A}-\frac{1}{2}\sum_{i,j=1}^3(\partial_i[(a_{ij}*\mu)v_j]f,f)_{L^2}.
        \end{split}
    \end{displaymath}
    Using
    $$\sum_{i=1}^3a_{ij}(v)v_i=\sum_{j=1}^3a_{ij}(v)v_j=0,$$
    it follows that
    \begin{align*}
    \sum_{i,j=1}^3\int_{\mathbb R^3}\partial_i[(a_{ij}*\mu)v_j]f^2dv&=\sum_{i,j=1}^3\int_{\mathbb R^3}\partial_i\bigg(\int_{\mathbb R^3}a_{ij}(v-v')v_j'\mu(v')dv'\bigg)f^2dv\\
    &=\sum_{i,j=1}^3\int_{\mathbb R^3}\partial_i[a_{ij}*(v_j\mu)]f^2dv.
    \end{align*}
    Expanding $\partial_ia_{ij}(v-v')$ to get
    $$\partial_ia_{ij}(v-v')=\partial_ia_{ij}(v)+\sum_{l=1}^3\bigg(\int_0^1\partial_l\partial_ia_{ij}(v-sv')ds\bigg)v_l',$$
    then by
    $$\int_{\mathbb R^3}v'_j\mu(v')dv'=0,$$
    we can deduce that
    $$\partial_ia_{ij}*(v_j\mu)=\sum_{l=1}^3\int_{\mathbb R^3}\int_0^1\partial_l\partial_ia_{ij}(v-sv')dsv_l'v'_j\mu(v')dv',$$
    and using
    $$|\partial^\beta a_{ij}(v)|\le c(1+|v|)^{\gamma+2-|\beta|},\ \forall\beta\in\mathbb N^3,$$
    we can conclude that
    \begin{displaymath}
        \begin{split}
           &\frac{1}{2}\left|\sum_{i,j=1}^3(\partial_i[(a_{ij}*\mu)v_j]f,f)_{L^2}\right|\\
           &=\frac{1}{2}\left|\sum_{i,j=1}^3\int_{\mathbb R^3}\int_{\mathbb R^3}\partial_ia_{ij}(v-v')v'_j\mu(v')dv'f^2(v)dv\right|\\
           &\le\frac{1}{2}\sum_{i,j=1}^3\sum_{l=1}^3\bigg|\int_{\mathbb R^3}\int_{\mathbb R^3}v'_lv'_j\mu(v')\int_0^1\partial_l\partial_ia_{ij}(v-sv')dsdv'f^2(v)dv\bigg|\\
           &\le C_{2}\int_{\mathbb R^3}(1+|v|)^\gamma f^2(v)dv.
        \end{split}
    \end{displaymath}
    We thus complete the proof of the lemma \ref{lemma2}.
\end{proof}
\par We recall the trilinear estimate, which has been addressed in~\cite{L-1}.
\begin{lemma}{\rm(~\cite{L-1}\ )}\label{lemma3}
    Let $F,G,H\in \mathcal{S}(\mathbb R^3)$, then there exists a constant $C_3>0$ such that
    $$|\langle\Gamma(F,G),H\rangle_{L^2}|\le C_3\|F\|_{L^2}\|G\|_{L^2_A}\|H\|_{L^2_A}.$$
\end{lemma}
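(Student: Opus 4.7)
The plan is to expand $\Gamma(F,G)$ using its explicit formula given above, pair with $H$ in $L^2(\mathbb R^3)$, and integrate the two divergence terms by parts so that every contribution takes the form
\[
\int_{\mathbb R^3}\bigl[a_{ij}*\phi\bigr](v)\,\Psi_G(v)\,\Psi_H(v)\,dv,
\]
with $\phi\in\{\mu^{1/2}F,\tfrac{v_i}{2}\mu^{1/2}F,\mu^{1/2}\partial_jF,\tfrac{v_i}{2}\mu^{1/2}\partial_jF\}$ and $(\Psi_G,\Psi_H)\in\{G,\partial_jG\}\times\{H,\partial_iH\}$. For the two pieces still carrying a derivative on $F$, I would shift $\partial_j$ off $F$ via integration by parts in the convolution variable $v_*$, using $\partial_j\mu^{1/2}=-\tfrac{v_j}{2}\mu^{1/2}$ and $\partial_{v_{*,j}}a_{ij}(v-v_*)=-\partial_j a_{ij}(v-v_*)$; after this step every integrand is linear in $F$ itself (undifferentiated), through a convolution against a kernel of polynomial growth at most $(1+|v|)^{\gamma+2}$ or $(1+|v|)^{\gamma+1}$.

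\textbf{Key anisotropic estimate.} The crucial ingredient is an anisotropic bound on $\bar A^F_{ij}(v):=[a_{ij}*(\mu^{1/2}F)](v)$. The identity $a_{ij}(w)\xi_i\eta_j=|w|^\gamma\bigl[|w|^2\,\xi\!\cdot\!\eta-(w\!\cdot\!\xi)(w\!\cdot\!\eta)\bigr]$ combined with Cauchy--Schwarz in $v_*$ against $\mu^{1/2}$ yields
\[
\bigl|\bar A^F_{ij}(v)\xi_i\eta_j\bigr|\le C\|F\|_{L^2}\bigl[(1+|v|)^{\gamma}\,|\mathbf P_v\xi|\,|\mathbf P_v\eta|+(1+|v|)^{\gamma+2}\,|(\mathbf I-\mathbf P_v)\xi|\,|(\mathbf I-\mathbf P_v)\eta|\bigr]
\]
up to intermediate cross terms of order $(1+|v|)^{\gamma+1}$. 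The heavier weight $(1+|v|)^{\gamma+2}$ thus appears only in the transverse direction $\mathbf I-\mathbf P_v$ --- which is precisely the direction in which~\eqref{2-1} provides the heavier weight $1+\gamma/2$ on $\nabla G$ and $\nabla H$. Decomposing $\nabla G,\nabla H$ along $\mathbf P_v\oplus(\mathbf I-\mathbf P_v)$ and applying Cauchy--Schwarz, each piece of the principal term $\bar A^F_{ij}\,\partial_jG\,\partial_iH$ is majorised by $\|F\|_{L^2}\|G\|_{L^2_A}\|H\|_{L^2_A}$; the cross terms with weight $(1+|v|)^{\gamma+1}$ split via $\gamma+1=\tfrac{\gamma}{2}+\tfrac{\gamma+2}{2}$ into one factor of $L^2_A$ norm for $G$ and one for $H$.

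\textbf{Lower-order pieces and main obstacle.} The terms with $G$ or $H$ undifferentiated are handled by scalar bounds such as $|[a_{ij}*(\tfrac{v_i}{2}\mu^{1/2}F)](v)|\le C\|F\|_{L^2}(1+|v|)^{\gamma+1}$ and $|[\partial_j a_{ij}*(\mu^{1/2}F)](v)|\le C\|F\|_{L^2}(1+|v|)^{\gamma+1}$, obtained again by Cauchy--Schwarz in $v_*$ against the Gaussian and the pointwise growth $|\partial^\beta a_{ij}|\le C(1+|v|)^{\gamma+2-|\beta|}$; these pair cleanly with the weight $1+\gamma/2$ on the undifferentiated factor furnished by~\eqref{2-1}, once more via a single Cauchy--Schwarz in $v$. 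Assembling the four integrals then gives the claimed bound $C_3\|F\|_{L^2}\|G\|_{L^2_A}\|H\|_{L^2_A}$. The main obstacle is the anisotropic matching in the principal piece: a naive scalar bound $|\bar A^F_{ij}(v)|\le C\|F\|_{L^2}(1+|v|)^{\gamma+2}$ overshoots by two powers of $|v|$ and breaks the estimate, so the proof genuinely rests on exploiting the collision-kernel degeneracy $a_{ij}(w)w_iw_j=0$ through the projector $\mathbf P_v$ to distribute the radial and transverse weights correctly among the three factors.
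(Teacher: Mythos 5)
The paper itself offers no proof of this lemma: it is quoted from \cite{L-1} (the analogous estimate in weighted Sobolev norms is Theorem 3 of \cite{G-1}), so there is no internal argument to compare against. Your outline follows what is essentially the cited route: expand $\Gamma(F,G)$ from its explicit representation, integrate by parts in $v$ and in the convolution variable $v_*$ so that $F$ appears undifferentiated, and exploit the degeneracy $a_{ij}(w)w_j=0$ through the splitting of the convolved kernel along $\mathbf P_v$ and $\mathbf I-\mathbf P_v$, matched against the anisotropic weights that \eqref{2-1} attaches to $\mathbf P_v\nabla$, $(\mathbf I-\mathbf P_v)\nabla$ and the zeroth-order part. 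You correctly identify this anisotropic matching as the crux of the principal term.

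There is, however, one quantitative claim in your treatment of the lower-order terms that is false as stated: the vector $b_j(v):=\sum_i\bigl[a_{ij}*\bigl(\tfrac{v_i}{2}\mu^{1/2}F\bigr)\bigr](v)$ does not satisfy $|b(v)|\le C\|F\|_{L^2}(1+|v|)^{\gamma+1}$. Since $\sum_i a_{ij}(v-v_*)v_{*i}=\sum_i a_{ij}(v-v_*)v_i=|v-v_*|^{\gamma+2}\bigl(\Pi_{(v-v_*)^{\perp}}v_*\bigr)_j$, only the component of $b$ along $v$ gains the extra power (contracting once more with $v/|v|$ brings in a second factor of $v_*$); the components transverse to $v$ are genuinely of size $(1+|v|)^{\gamma+2}\|F\|_{L^2}$ --- test with $v=(R,0,0)$ and $v_*=(0,1,0)$, which gives a transverse component of order $R^{\gamma+2}$. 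Consequently the term $\sum_j\int b_j\,\partial_jG\,H\,dv$ cannot be closed by the ``clean'' scalar pairing you describe (the budget $\gamma+1=\tfrac{\gamma}{2}+(1+\tfrac{\gamma}{2})$ is not available). One must apply the same $\mathbf P_v\oplus(\mathbf I-\mathbf P_v)$ decomposition to $\nabla G$ here too: the $(1+|v|)^{\gamma+2}$ transverse part of $b$ pairs with $(\mathbf I-\mathbf P_v)\nabla G$ (weight $1+\gamma/2$ from \eqref{2-1}) and with $\|H\|_{2,1+\gamma/2}$, while the $(1+|v|)^{\gamma+1}$ radial part pairs with $\mathbf P_v\nabla G$ (weight $\gamma/2$) and $H$. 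This is a local, fixable slip --- the needed tool is already in your sketch for the principal term --- but as written that step would not go through.
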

Let $F=\sqrt\mu,G=f,H=g$ and $F=f,G=\sqrt\mu,H=g$ in lemma \ref{lemma3}, then we have the following estiamtes for the operators $\mathcal{L}_1$ and $\mathcal{L}_2$.
\begin{cor}\label{corollary}
    Let $f,g\in \mathcal{S}(\mathbb R^3)$, then there exists a constant $C_{4}>0$ such that
    $$
    |(\mathcal{L}_1f,g)_{L^2}|\le C_4\|f\|_{L^2_A}\|g\|_{L^2_A},
    $$
    $$
    |(\mathcal{L}_2f,g)_{L^2}|\le C_4\|f\|_{L^2}\|g\|_{L^2_A}.
    $$
\end{cor}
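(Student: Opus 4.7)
The plan is to apply Lemma \ref{lemma3} directly with two different choices of the three input functions, one choice producing each of the two claimed inequalities. The key observation is that, by the very definition given in the introduction, $\mathcal{L}_1 f = -\Gamma(\sqrt{\mu},f)$ and $\mathcal{L}_2 f = -\Gamma(f,\sqrt{\mu})$, so both inner products $(\mathcal{L}_1 f,g)_{L^2}$ and $(\mathcal{L}_2 f,g)_{L^2}$ are (up to sign) exactly of the trilinear form $\langle \Gamma(F,G),H\rangle_{L^2}$ controlled by Lemma \ref{lemma3}.

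For the first inequality I would take $F=\sqrt{\mu}$, $G=f$, $H=g$ in Lemma \ref{lemma3}, which yields
\begin{equation*}
|(\mathcal{L}_1 f, g)_{L^2}| \;=\; |\langle \Gamma(\sqrt{\mu},f), g\rangle_{L^2}| \;\le\; C_3\,\|\sqrt{\mu}\|_{L^2}\,\|f\|_{L^2_A}\,\|g\|_{L^2_A}.
\end{equation*}
Since $\sqrt{\mu}$ is a fixed Gaussian, $\|\sqrt{\mu}\|_{L^2}$ is a finite universal constant and can be absorbed into a new constant $C_4$.

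For the second inequality I would instead take $F=f$, $G=\sqrt{\mu}$, $H=g$, obtaining
\begin{equation*}
|(\mathcal{L}_2 f, g)_{L^2}| \;=\; |\langle \Gamma(f,\sqrt{\mu}), g\rangle_{L^2}| \;\le\; C_3\,\|f\|_{L^2}\,\|\sqrt{\mu}\|_{L^2_A}\,\|g\|_{L^2_A}.
\end{equation*}
The only point that needs checking is that $\|\sqrt{\mu}\|_{L^2_A}$ is finite. Looking at the definition of $\|\cdot\|_{L^2_A}$, this quantity is a sum of integrals of $\bar a_{ij}(v)=(a_{ij}*\mu)(v)$ against factors involving $\partial_i\sqrt{\mu}\,\partial_j\sqrt{\mu}$ and $v_iv_j\mu$. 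The pointwise bound $|\partial^\beta a_{ij}(v)|\le c(1+|v|)^{\gamma+2-|\beta|}$ used in the proof of Lemma \ref{lemma2} translates, via convolution with $\mu$, into polynomial growth of $\bar a_{ij}$, which is defeated by the Gaussian decay of $\sqrt{\mu}$ and its derivatives; hence $\|\sqrt{\mu}\|_{L^2_A}$ is a finite constant that can again be absorbed into $C_4$.

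There is no real obstacle in this argument, the corollary being essentially a direct specialization of the trilinear estimate; the only mildly technical point is the routine verification of the finiteness of the weighted norms of $\sqrt{\mu}$, which follows at once from the Gaussian decay of $\mu$ against the polynomial growth of $\bar a_{ij}$.
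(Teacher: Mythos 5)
Your proposal is correct and coincides with the paper's own argument: the corollary is stated there precisely as the specialization $F=\sqrt{\mu},G=f,H=g$ and $F=f,G=\sqrt{\mu},H=g$ of Lemma \ref{lemma3}, with the finite constants $\|\sqrt{\mu}\|_{L^2}$ and $\|\sqrt{\mu}\|_{L^2_A}$ absorbed into $C_4$. Your explicit verification that $\|\sqrt{\mu}\|_{L^2_A}<\infty$ is a welcome detail the paper leaves implicit, but it is the same proof.
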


\section{Energy estimates}
For $g\in \mathcal S(\mathbb R^3)$, we need the following interpolation inequality, for all $0<\delta<1$
\begin{equation}\label{interpolation}
      \|g\|^2_{2,\gamma/2}
        \le\delta\|g\|^2_{L^2_A}+C_\delta\|g\|^2_{L^2} .
\end{equation}
From H$\rm\ddot{o}$lder's inequality and inequality \eqref{2-1}, it follows that
\begin{equation*}
\begin{split}
    &\|g\|^2_{2,\gamma/2}=\int_{\mathbb R^3}(1+|v|)^{\gamma}g^{\frac{2\gamma}{\gamma+2}}(v)g^{\frac{4}{\gamma+2}}(v)dv\\
    &\le\|g\|^{\frac{2\gamma}{\gamma+2}}_{2,\gamma/2+1}\|g\|^{\frac{4}{\gamma+2}}_{L^2}\le\left(\frac{1}{C_1}\|g\|_{L^2_A}\right)^{\frac{2\gamma}{\gamma+2}}\|g\|^{\frac{4}{\gamma+2}}_{L^2},\\
\end{split}
\end{equation*}
then by using the Yong inequality  
$$
a b\le\frac{1}{p}a^p+\frac{1}{q}b^{q},\quad (a,b\ge0,\frac{1}{p}+\frac{1}{q}=1)
$$ 
and $\gamma\ge0$, we get
\begin{equation*}
\begin{split}
    \left(\frac{1}{C_1}\|g\|_{L^2_A}\right)^{\frac{2\gamma}{\gamma+2}}\|g\|^{\frac{4}{\gamma+2}}_{L^2}&\le\frac{\gamma}{\gamma+2}\delta\|g\|_{L^2_A}^{2}+\frac{2}{\gamma+2}C_1^{-\gamma}\delta^{-\gamma/2}\|g\|^{2}_{L^2}\\
    &\le\delta\|g\|_{L^2_A}^{2}+C_1^{-\gamma}\delta^{-\gamma/2}\|g\|^{2}_{L^2}.
\end{split}
\end{equation*}
Let $C_\delta=C_1^{-\gamma}\delta^{-\gamma/2}$, then it follows that \eqref{interpolation} holds.

We study now the energy estimates of solution of Cauchy problem \eqref{1-2}, we have

\begin{lemma}\label{lemma-1}
    Assume $f_0\in L^{2}(\mathbb R^3)$ and $T>0$, let $f$ be the solution of the Cauchy problem \eqref{1-2}  with $\|f\|_{L^\infty([0,T];L^2(\mathbb R^3))}$ small enough. Then there exists a constant $B_0>0$ such that
    \begin{equation}\label{3-1+0}
       \|f\|_{L^\infty([0, T];L^2(\mathbb R^3))}^2+\|f\|^2_{L^2([0, T];L^2_A(\mathbb R^3))}\le  B^2_0\|f_0\|^2_{L^2(\mathbb R^3)}\le \epsilon^2 B^2_0.
    \end{equation}
\end{lemma}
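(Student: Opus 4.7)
The plan is to perform the standard $L^2$ energy estimate for the linearized equation \eqref{1-2}, combining the coercivity of $\mathcal{L}_1$ (Lemma \ref{lemma2}), the bound on $\mathcal{L}_2$ (Corollary \ref{corollary}), and the trilinear estimate (Lemma \ref{lemma3}) for the nonlinear term, then closing the estimate via the smallness assumption on $\|f\|_{L^\infty([0,T];L^2)}$.

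Concretely, I would take the $L^2(\mathbb{R}^3)$ inner product of the equation $\partial_t f + \mathcal{L}_1 f + \mathcal{L}_2 f = \Gamma(f,f)$ with $f$ to obtain
\begin{equation*}
\frac{1}{2}\frac{d}{dt}\|f(t)\|_{L^2}^2 + (\mathcal{L}_1 f, f)_{L^2} + (\mathcal{L}_2 f, f)_{L^2} = (\Gamma(f,f), f)_{L^2}.
\end{equation*}
By Lemma \ref{lemma2}, the first operator term is bounded below by $\|f\|_{L^2_A}^2 - C_2 \|f\|_{2,\gamma/2}^2$. The interpolation inequality \eqref{interpolation} lets me absorb $C_2\|f\|_{2,\gamma/2}^2$ into a small fraction of $\|f\|_{L^2_A}^2$ up to a lower-order $L^2$ term. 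The $\mathcal{L}_2$ contribution, by Corollary \ref{corollary}, satisfies $|(\mathcal{L}_2 f, f)_{L^2}| \le C_4 \|f\|_{L^2}\|f\|_{L^2_A}$, which via Young's inequality also splits into a small multiple of $\|f\|_{L^2_A}^2$ plus a constant multiple of $\|f\|_{L^2}^2$.

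The nonlinear term is handled via Lemma \ref{lemma3} with $F=H=f$ and $G=f$:
\begin{equation*}
|(\Gamma(f,f), f)_{L^2}| \le C_3 \|f\|_{L^2}\|f\|_{L^2_A}^2.
\end{equation*}
Using the smallness assumption $\|f\|_{L^\infty([0,T];L^2)} \le \epsilon$ with $C_3 \epsilon$ smaller than the coefficient of $\|f\|_{L^2_A}^2$ on the left-hand side (say $\le 1/4$), this entire nonlinear term is absorbed into the dissipation. This is the step that crucially requires the smallness hypothesis; it is not really the main \emph{technical} obstacle but it is the point where the argument would fail without the assumption.

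After the absorptions, I am left with an inequality of the form
\begin{equation*}
\frac{d}{dt}\|f(t)\|_{L^2}^2 + c\,\|f(t)\|_{L^2_A}^2 \le C\,\|f(t)\|_{L^2}^2,
\end{equation*}
for positive constants $c, C$. Integrating from $0$ to any $t \in [0,T]$ and applying Gronwall's lemma to the resulting inequality yields $\|f(t)\|_{L^2}^2 \le e^{CT}\|f_0\|_{L^2}^2$, after which a second integration of the differential inequality gives the combined bound
\begin{equation*}
\|f\|_{L^\infty([0,T];L^2)}^2 + \|f\|_{L^2([0,T];L^2_A)}^2 \le B_0^2 \|f_0\|_{L^2}^2,
\end{equation*}
with $B_0$ depending on $T$ and the constants $C_1, C_2, C_3, C_4$. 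Taking $\epsilon$ small enough so that $\epsilon B_0$ is bounded finalizes \eqref{3-1+0}. The main conceptual point is the coordinated use of interpolation and Young's inequality to ensure that only a controlled fraction of $\|f\|_{L^2_A}^2$ is consumed by the three error contributions (from $\mathcal{L}_1$, $\mathcal{L}_2$, and $\Gamma$) so that a strictly positive portion of the dissipation survives on the left-hand side.
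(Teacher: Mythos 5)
Your proposal is correct and follows essentially the same route as the paper: the same $L^2$ energy identity, coercivity of $\mathcal{L}_1$ combined with the interpolation inequality \eqref{interpolation}, the Corollary \ref{corollary} bound on $\mathcal{L}_2$, absorption of the trilinear term via the smallness of $\|f\|_{L^\infty([0,T];L^2)}$, and finally integration plus Gronwall. The only differences are in the choice of absorption constants, which is immaterial.
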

We will take $\epsilon$ small such that $0<\epsilon B_0\le 1$.
\begin{proof}
By \eqref{1-2}, we have that
\begin{equation*}
    \frac{1}{2}\frac{d}{dt}\|f\|^2_{L^2}+(\mathcal L_1f,f)_{L^2}=(\Gamma(f,f),f)_{L^2}-(\mathcal L_2f,f)_{L^2}.
\end{equation*}
Using lemma \ref{lemma2} and taking $\delta=\frac{1}{8C_2}$ in \eqref{interpolation}, for all $0\le t\le T$, we can conclude
\begin{equation*}
\begin{split}
    (\mathcal L_1f,f)_{L^2}\ge\|f\|^2_{L^2_A}-C_2\|f\|^2_{2,\gamma/2}\ge\frac{7}{8}\|f\|^2_{L^2_A}-\tilde C_2\|f\|^2_{L^2},
\end{split}
\end{equation*}
and $\tilde C_2$ depends on $C_1$. Since $\|f\|_{L^\infty([0,T];L^2(\mathbb R^3))}\le\epsilon$,
using lemma \ref{lemma3} and taking $\epsilon$ such that $C_3\epsilon\le\frac{1}{8}$, for all $0\le t\le T$, we have
\begin{equation*}
    (\Gamma(f,f),f)_{L^2}\le C_3\|f\|_{L^2}\|f\|^2_{L^2_A}\le\frac{1}{8}\|f\|^2_{L^2_A},
\end{equation*}
corollary \ref{corollary} and H$\rm\ddot{o}$lder's inequality implies
\begin{equation*}
\begin{split}
    \left|(\mathcal L_2f,f)_{L^2}\right|\le C_4\|f\|_{L^2}\|f\|_{L^2_A}\le\frac{1}{8}\|f\|^2_{L^2_A}+2C_4^2\|f\|^2_{L^2}.
\end{split}
\end{equation*}
Combining the above estimates, one has
\begin{equation*}
\begin{split}
     \frac{d}{dt}\|f\|^2_{L^2}+\|f\|^2_{L^2_A}\le\left(2\tilde C_2+4C_4^2\right)\|f\|^2_{L^2},
\end{split}
\end{equation*}
integrating from 0 to $t$ to get
\begin{equation}\label{integration}
    \|f(t)\|^2_{L^2}+\int_0^t\|f(\tau)\|^2_{L^2_A}d\tau\le\left(2\tilde C_2+4C_4^2\right)\int_0^t\|f(\tau)\|^2_{L^2}d\tau,
\end{equation}
then by Gronwall inequality, we get for $0\le t\le T$
\begin{equation}\label{f}
    \|f(t)\|^2_{L^2}\le e^{\left(2\tilde C_2+4C_4^2\right)T}\|f_0\|^2_{L^2}.
\end{equation}
Substituting \eqref{f} into \eqref{integration} and taking $B_0\ge\sqrt{\left(2\tilde C_2+4C_4^2\right)T}e^{2\left(2\tilde C_2+4C_4^2\right)T}$, one can obtain
\begin{equation*}
\begin{split}
    \|f(t)\|^2_{L^2}+\int_0^t\|f(\tau)\|^2_{L^2_A}d\tau\le\left(2\tilde C_2+4C_4^2\right)Te^{\left(2\tilde C_2+4C_4^2\right)T}\|f_0\|^2_{L^2}\le B_0^2\epsilon^2.
\end{split}
\end{equation*}
\end{proof}

\begin{lemma}\label{lemma-1+1}
     Assume $f_0\in L^{2}(\mathbb R^3)$ and $T>0$, let $f$ be the solution of the Cauchy problem \eqref{1-2}  with $\|f\|_{L^\infty([0,T];L^2(\mathbb R^3))}$ small enough. Then there exists a constant $B_1>0$ such that
    \begin{equation}\label{3-1+1}
       \|\tau \partial_\tau f\|_{L^\infty([0, T]; L^2(\mathbb R^3))}^2+\|\tau \partial_\tau f\|^2_{L^2([0, T]; L^2_A(\mathbb R^3))}\le \epsilon^2 B^2_1.
    \end{equation}
\end{lemma}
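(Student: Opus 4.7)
My plan is to differentiate the PDE \eqref{1-2} once in time, multiply by $t$, and run a weighted $L^2$ energy estimate on $G := t\partial_t f$, parallel to the proof of Lemma \ref{lemma-1}. Writing $g=\partial_t f$, differentiating \eqref{1-2} and using the bilinearity of $\Gamma$ together with the time-independence of $\mathcal L$ gives $\partial_t g + \mathcal L g = \Gamma(g,f) + \Gamma(f,g)$; multiplying by $t$ and using $t\partial_t g = \partial_t G - g$ then yields
\begin{equation*}
\partial_t G + \mathcal L G = g + \Gamma(G, f) + \Gamma(f, G).
\end{equation*}
Taking the $L^2$-scalar product with $G$ transforms this into
\begin{equation*}
\tfrac12 \tfrac{d}{dt}\|G\|_{L^2}^2 + (\mathcal L G, G)_{L^2} = (g, G)_{L^2} + (\Gamma(G,f), G)_{L^2} + (\Gamma(f,G), G)_{L^2}.
\end{equation*}

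The only genuinely new feature compared with Lemma \ref{lemma-1} is the forcing $(g,G)_{L^2} = t\|g\|_{L^2}^2$. A direct Cauchy--Schwarz bound $|(g,G)|\le \|g\|_{L^2}\|G\|_{L^2}$ is useless, since $\|g\|_{L^2}$ is precisely the quantity we want to control. The key trick is to substitute $g = \partial_t f = \Gamma(f,f)-\mathcal L f$ from \eqref{1-2} itself, so that Lemma \ref{lemma3} and Corollary \ref{corollary} give
\begin{equation*}
|(g,G)_{L^2}| \le C_3\|f\|_{L^2}\|f\|_{L^2_A}\|G\|_{L^2_A} + C_4(\|f\|_{L^2_A}+\|f\|_{L^2})\|G\|_{L^2_A}.
\end{equation*}
Using $\|f\|_{L^2}\le 1$ and Young's inequality, this is bounded by $\tfrac18\|G\|_{L^2_A}^2 + C(\|f\|_{L^2_A}^2+\|f\|_{L^2}^2)$, i.e.\ by quantities whose time-integral is already controlled by Lemma \ref{lemma-1}.

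The coercivity of $\mathcal L$ and the two $G$-nonlinear terms are handled exactly as in the proof of Lemma \ref{lemma-1}: Lemma \ref{lemma2}, Corollary \ref{corollary} and the interpolation \eqref{interpolation} yield $(\mathcal L G,G)\ge \tfrac34\|G\|_{L^2_A}^2-\tilde C_2\|G\|_{L^2}^2$, while Lemma \ref{lemma3} combined with the smallness $C_3\epsilon\le 1/8$ bounds $|(\Gamma(G,f),G)|+|(\Gamma(f,G),G)|$ by $\tfrac14\|G\|_{L^2_A}^2+C\|f\|_{L^2_A}^2\|G\|_{L^2}^2$. Combining all the estimates yields the differential inequality
\begin{equation*}
\tfrac{d}{dt}\|G\|_{L^2}^2+\tfrac12\|G\|_{L^2_A}^2 \le C\bigl(1+\|f\|_{L^2_A}^2\bigr)\|G\|_{L^2}^2+C\bigl(\|f\|_{L^2_A}^2+\|f\|_{L^2}^2\bigr).
\end{equation*}
Integrating over $[0,T]$ with $G(0)=0$ and invoking Lemma \ref{lemma-1} to bound $\int_0^T\|f\|_{L^2_A}^2\,d\tau$ and $\|f\|_{L^\infty([0,T];L^2)}^2$ by $\epsilon^2 B_0^2$, Gronwall then closes \eqref{3-1+1} with a suitable constant $B_1=B_1(T,B_0)$.

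The main obstacle is the forcing $(g,G)$: the substitution of $\partial_t f$ by the right-hand side $-\mathcal L f+\Gamma(f,f)$ of \eqref{1-2} is the mechanism that allows the estimate to close. A minor technical point is justifying $G(0)=0$ when $f_0\in L^2$ only (since $\partial_t f$ may a priori blow up as $t\to 0^+$); this is handled either by approximating $f_0$ by smooth data for which $\partial_t f(0)$ is manifestly finite (so $G(0)=0$ trivially) and passing to the limit, or by working with the smooth solution from \cite{L-1} and integrating from $\eta>0$ while verifying $\|G(\eta)\|_{L^2}\to 0$ as $\eta\to 0^+$.
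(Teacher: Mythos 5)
Your proposal is correct and follows essentially the same route as the paper: the paper likewise derives the equation for $t\partial_t f$, isolates the forcing term $\int_0^t\tau\|\partial_\tau f\|_{L^2}^2\,d\tau$, and closes it by the very same substitution $\partial_\tau f=\Gamma(f,f)-\mathcal L f$ combined with Lemma \ref{lemma3}, Corollary \ref{corollary} and the smallness of $\epsilon$. The only cosmetic difference is that where you run Gronwall with the integrable coefficient $1+\|f\|_{L^2_A}^2$, the paper instead absorbs the term $2C_3^2\|\tau\partial_\tau f\|_{L^\infty L^2}^2\int_0^t\|f\|_{L^2_A}^2\,d\tau$ into the left-hand side using $2C_3^2B_0^2\epsilon^2\le\tfrac14$; both devices close the estimate.
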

We also take $\epsilon$ small such that $0<\epsilon B_1\le 1$.
\begin{proof} Since the solution of \eqref{1-2} belongs to $C^\infty(]0, T[; \mathcal{S}(\mathbb{R}^3))$, we have that
$$
\partial_t (t\partial_t f)+\mathcal{L}_1(t\partial_t f)=\partial_t f-\mathcal{L}_2(t\partial_t f)+t\partial_t\Gamma(f, f),
$$
and for $0\le t\le T$
\begin{equation*}
    \begin{split}
&\frac 12\|t\partial_t f\|^2_{L^2}+\int^t_0(\mathcal{L}_1(\tau\partial_\tau f), \tau\partial_\tau f)_{L^2}d\tau \\
=&\int^t_0\tau \|\partial_\tau f\|^2_{L^2} d\tau-\int^t_0
(\mathcal{L}_2(\tau\partial_\tau f), \tau\partial_\tau f)_{L^2} d\tau
+\int^t_0(\tau\partial_\tau\Gamma(f, f),
\tau\partial_\tau f)_{L^2}d\tau\\
=& R_1+ R_2+ R_3.
    \end{split}
\end{equation*}
Firstly, using lemma \ref{lemma2} and \eqref{interpolation} with $\delta=\frac 1{8C_2}$, for all $0\le t\le T$, we can conclude
\begin{equation*}
\begin{split}
        \int^t_0\tau^2(\mathcal{L}_1(\partial_\tau f),\partial_\tau f)_{L^2}d\tau&\ge\|\tau\partial_\tau f\|^2_{L^2([0,t];L^2_A)}-C_2\int_0^t\tau^2\|\partial_\tau f\|^2_{2,\gamma/2}d\tau\\
        &\ge\frac 78\|\tau\partial_\tau f\|^2_{L^2([0,t];L^2_A)}-\tilde C_2 T \int_0^t\tau\|\partial_\tau f\|^2_{L^2}d\tau .
\end{split}
\end{equation*}
For the term $R_1$, since $f$ is solution of \eqref{1-2}, i. e.
$$
\partial_t f=\Gamma(f, f)-\mathcal{L} (f),
$$
using lemma \ref{lemma3} and corollary \ref{corollary}, for all $0\le t\le T$, we have
\begin{equation*}
\begin{split}
         \int^t_0\tau\|\partial_\tau f\|_{L^2}^2d\tau&=\int^t_0\tau(\Gamma(f,f),\partial_\tau f)_{L^2}d\tau-\int^t_0\tau(\mathcal{L}(f),\partial_\tau f)_{L^2}d\tau\\
         &\le C_3\int^t_0\|f\|_{L^2}\|f\|_{L^2_A}\|\tau\partial_\tau f\|_{L^2_A}d\tau\\
         &\qquad+C_4\int^t_0\left(\|f\|_{L^2}+\|f\|_{L^2_A}\right)\|\tau\partial_\tau f\|_{L^2_A}d\tau.
\end{split}
\end{equation*}
Using Cauchy-Schwarz inequality, for $0<\delta<1$,
\begin{equation*}
\begin{split}
         \int^t_0\tau\|\partial_\tau f\|_{L^2}^2d\tau
         &\le \delta \|\tau\partial_\tau f\|^2_{L^2([0,t];L^2_A)}+
         \frac{C^2_3}{2\delta}\|f\|_{L^\infty([0, t]; L^2)}^2\int^t_0\|f\|^2_{L^2_A}d\tau\\
         &\qquad+\frac{C^2_4}{\delta}\left(T\|f\|^2_{L^\infty([0, t]; L^2)}+\int^t_0\|f\|^2_{L^2_A}d\tau\right).
\end{split}
\end{equation*}
Then, \eqref{3-1+0} implies, there exists $C_\delta>0$ such that
\begin{equation}\label{3-2+1}
R_1=\int^t_0\tau\|\partial_\tau f\|_{L^2}^2d\tau \le C_\delta B_0^2\epsilon^2+\delta \|\tau\partial_\tau f\|^2_{L^2([0,t];L^2_A)}.
\end{equation}
For the term $R_2$, using corollary \ref{corollary}, for all $0\le t\le T$, we have
\begin{equation*}
\begin{split}
       |R_2|&= \left|\int^t_0\tau^2(\mathcal{L}_2(\partial_\tau f),\partial_\tau f)_{L^2}d\tau\right|\le C_4\int^t_0\tau^2\|\partial_\tau f\|_{L^2}\|\partial_\tau f\|_{L^2_A}d\tau\\
        &\le\frac{1}{8}\|\tau\partial_\tau f\|^2_{L^2([0,t];L^2_A)}+2C_4^2T\int_0^t\tau\|\partial_\tau f\|^2_{L^2}d\tau,
\end{split}
\end{equation*}
then, using \eqref{3-2+1} with $2C_4^2T \delta\le \frac 18$,
\begin{equation*}
       |R_2|\le\frac{1}{4}\|\tau\partial_\tau f\|^2_{L^2([0,t];L^2_A)}+\tilde C_4B^2_0\epsilon^2.
\end{equation*}
Finally, for the term $R_3$, lemma \ref{lemma3} implies
\begin{equation*}
\begin{split}
        |R_3|=& \left|\int^t_0\tau^2\partial_\tau(\Gamma(f,f),\partial_\tau f)_{L^2}d\tau\right|\\
        &\le\int^t_0\tau^2\left|(\Gamma(\partial_\tau f,f),\partial_\tau f)\right|d\tau
          +\int^t_0\tau^2\left|(\Gamma(f,\partial_\tau f),\partial_\tau f)\right|d\tau\\
         &\le C_3\int^t_0\tau^2\|\partial_\tau f\|_{L^2} \|f\|_{L^2_A} \|\partial_\tau f\|_{L^2_A}d\tau\\
         &\qquad\qquad +C_3\int^t_0\|f\|_{L^2}\|\tau\partial_\tau f\|^2_{L^2_A}d\tau\\
         &\le\frac{1}{8}\|\tau\partial_\tau f\|^2_{L^2([0,t];L^2_A)}+2C^2_3\int^t_0\|f\|^2_{L^2_A}\|\tau\partial_\tau f\|^2_{L^2}d\tau\\
         &\qquad\qquad +C_3\int^t_0\|f\|_{L^2}\|\tau\partial_\tau f\|^2_{L^2_A}d\tau\\
         &\le\frac{1}{8}\|\tau\partial_\tau f\|^2_{L^2([0,t];L^2_A)}+2C^2_3
         \|\tau\partial_\tau f\|^2_{L^\infty([0, t]; L^2)}
         \int^t_0\|f\|^2_{L^2_A}d\tau\\
         &\qquad\qquad +C_3\|f\|^2_{L^\infty([0, t]; L^2)} \int^t_0\|\tau\partial_\tau f\|^2_{L^2_A}d\tau.
    \end{split}
\end{equation*}
Using \eqref{3-1+0} and taking $\epsilon>0$ small such that
$$
2C^2_3 B^2_0\epsilon^2\le \frac 14,\quad C_3 B^2_0\epsilon^2\le \frac 18.
$$
We get then, for all $0\le t\le T$,
\begin{equation*}
         \left|\int^t_0\tau^2\partial_\tau(\Gamma(f,f),\partial_\tau f)_{L^2}d\tau\right|
         \le\frac{1}{4}\|\tau\partial_\tau f\|^2_{L^2([0,t];L^2_A)}+\frac 14 \|\tau\partial_\tau f\|^2_{L^\infty([0, t]; L^2)} .
\end{equation*}
Combining the above estimates, taking $\delta=\frac 18$ in \eqref{3-2+1}, one has
\begin{equation*}
\begin{split}
        \frac 14 \|\tau\partial_\tau f\|^2_{L^\infty([0, T]; L^2)} +\frac{3}{8}\|\tau\partial_\tau f\|^2_{L^2([0,T];L^2_A)}\le C_5\epsilon^2+\tilde C_2 T \int_0^T\tau\|\partial_\tau f\|^2_{L^2}d\tau,
\end{split}
\end{equation*}
using \eqref{3-2+1} with $\tilde C_2 T \delta\le \frac 18$ and taking $B_1\ge2\sqrt{C_5}$, then it follows that
    \begin{equation*}
     \|\tau\partial_\tau f\|^2_{L^\infty([0, T]; L^2)} +\|\tau\partial_\tau f\|^2_{L^2([0,T];L^2_A)}\le4C_5\epsilon^2\le B^2_1\epsilon^2,
    \end{equation*}
with $B_1$ depends only on $C_1, C_2, C_3, C_4$ and $T$, which end the proof of lemma \ref{lemma-1+1}.
\end{proof}

\section{Proof of main Theorem}
In this section, we shall show the analytic regularity of time variable for $t>0$. We construct the following estimate, from which we can deduce the inequality \eqref{1-7} directly.

\begin{prop}\label{Proposition4-1}
     Assume $f_0\in L^{2}(\mathbb R^3)$ and $T>0$, let $f$ be the solution of the Cauchy problem \eqref{1-2}  with $\|f\|_{L^\infty([0,T];L^2(\mathbb R^3))}$ small enough. Then there exists a constant $B>0$ such that, for any $k\in\mathbb{N}_+$
    \begin{equation}\label{3-1}
       \|\tau^k\partial^k_\tau f\|_{L^\infty([0,T];L^2(\mathbb R^3))}^2+
       \|\tau^k\partial^k_\tau f\|^2_{L^2([0,T];L^2_A(\mathbb R^3))}\le B^{2(k-1)}((k-2)!)^2.
    \end{equation}
\end{prop}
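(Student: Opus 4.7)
The plan is to prove the proposition by induction on $k \ge 2$, with Lemmas \ref{lemma-1} (for $k=0$) and \ref{lemma-1+1} (for $k=1$) providing the base data. Applying $\partial_\tau^k$ to \eqref{1-2} and multiplying by $\tau^k$, one obtains via Leibniz and the bilinearity of $\Gamma$
$$\partial_\tau g_k + \mathcal{L}_1 g_k = k\tau^{k-1}\partial_\tau^k f - \mathcal{L}_2 g_k + \sum_{j=0}^k \binom{k}{j}\Gamma(g_j, g_{k-j}),\qquad g_j := \tau^j\partial_\tau^j f.$$
Pairing with $g_k$ in $L^2$, integrating on $[0,t]$, and using the coercivity of Lemma \ref{lemma2} together with the interpolation \eqref{interpolation} (with $\delta = \frac{1}{8C_2}$) yields
$$\tfrac12\|g_k(t)\|^2_{L^2} + \tfrac78\|g_k\|^2_{L^2([0,t];L^2_A)} \le \tilde C_2\int_0^t\|g_k\|^2_{L^2}d\tau + |R_0| + |R_2| + |R_3|,$$
with $R_0 := k\int_0^t\tau^{2k-1}\|\partial_\tau^k f\|^2_{L^2}d\tau$, $R_2 := -\int_0^t(\mathcal{L}_2 g_k,g_k)_{L^2}d\tau$, and $R_3 := \sum_{j=0}^k\binom{k}{j}\int_0^t(\Gamma(g_j,g_{k-j}),g_k)_{L^2}d\tau$.

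The delicate term is $R_0$. Following the trick used for the term $R_1$ in the proof of Lemma \ref{lemma-1+1}, substitute $\partial_\tau^k f = -\mathcal{L}(\partial_\tau^{k-1}f) + \sum_{j=0}^{k-1}\binom{k-1}{j}\Gamma(\partial_\tau^j f,\partial_\tau^{k-1-j}f)$ and split $\tau^{2k-1} = \tau^{k-1}\cdot\tau^k$, obtaining
$$R_0 = -k\int_0^t(\mathcal{L}g_{k-1},g_k)_{L^2}d\tau + k\sum_{j=0}^{k-1}\binom{k-1}{j}\int_0^t(\Gamma(g_j,g_{k-1-j}),g_k)_{L^2}d\tau.$$
Corollary \ref{corollary} and Lemma \ref{lemma3} then bound the linear part by $\|g_{k-1}\|_{L^2_A}\|g_k\|_{L^2_A}$ and each bilinear summand by $\|g_j\|_{L^2}\|g_{k-1-j}\|_{L^2_A}\|g_k\|_{L^2_A}$; Young's inequality absorbs a fraction of $\|g_k\|^2_{L^2([0,t];L^2_A)}$ into the left-hand side. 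The term $R_2$ is handled exactly as in Lemma \ref{lemma-1+1}, and in $R_3$ the endpoint indices $j\in\{0,k\}$ produce factors of $\|f\|_{L^\infty([0,t];L^2)}\le B_0\epsilon$ whose smallness lets them be absorbed on the left.

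The remaining quantities are controlled by the induction hypothesis $\|g_j\|_{L^\infty L^2} + \|g_j\|_{L^2 L^2_A} \le 2 B^{j-1}(j-2)!$ for $2 \le j \le k-1$, together with Lemmas \ref{lemma-1} and \ref{lemma-1+1} for $j = 0, 1$. The Gevrey-$1$ bookkeeping rests on the identity
$$\binom{k}{j}(j-2)!(k-j-2)! = \frac{k!}{j(j-1)(k-j)(k-j-1)},$$
from which $\sum_{j=2}^{k-2}\binom{k}{j}(j-2)!(k-j-2)! \le C(k-2)!$ uniformly in $k$ (dominated by $j\in\{2,k-2\}$), with a parallel bound for the $R_0$ sum involving $\binom{k-1}{j}$ and the extra prefactor $k$. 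Multiplying by $B^{k-2}$, the product of the induction constants at levels $j-1$ and $k-j-1$, each contribution is at most $CB^{2(k-2)}((k-2)!)^2$; the free factor $B^2$ between this and the target $B^{2(k-1)}((k-2)!)^2$ closes the induction for $B$ chosen large enough (depending on $C_1,\ldots,C_4,T,B_0,B_1$) and $\epsilon$ small enough.

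The main obstacle is precisely this combinatorial bookkeeping: one must simultaneously track the Leibniz coefficients $\binom{k}{j}$, the prefactor $k$ arising from $\partial_\tau(\tau^k)$ in $R_0$, and the factorials coming from the induction. The nontrivial observation is that trading $\partial_\tau^k f$ for $\partial_\tau^{k-1}$ applied to the right-hand side of \eqref{1-2}, then distributing the weight $\tau^k$ symmetrically across the two arguments of each $\Gamma$, matches exactly the Gevrey-$1$ scale $(k-2)!$ — the missing factor of $k-1$ compared to the naïve $(k-1)!$ is precisely what is extracted from the equation's self-adjoint structure.
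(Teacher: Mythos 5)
Your proposal is correct and follows essentially the same route as the paper: the same induction, the same substitution of the equation for $\partial_\tau^k f$ to handle the term $k\int_0^t\tau^{2k-1}\|\partial_\tau^k f\|^2_{L^2}d\tau$, the same absorption of the endpoint $\Gamma$ terms via the smallness of $\|f\|_{L^\infty L^2}$, and the same combinatorial bound $\sum_j \frac{k(k-1)}{j(j-1)(k-j)(k-j-1)}=O(1)$ that the paper isolates in its Lemma 5.1. The only step left implicit in your write-up is the final Gronwall argument disposing of the $\tilde C_2\int_0^t\|g_k\|^2_{L^2}d\tau$ term, which is routine.
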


We have that \eqref{3-1} implies immediately \eqref{1-7}, so it is enough to prove this proposition \ref{Proposition4-1} for theorem \ref{theorem}. We prove this proposition by induction for the index $k$. For $k=1$, it is enough to take, in \eqref{3-1+1},
$$
0<\epsilon B_1\le 1,
$$
and by convention $(-1)!=1, 0!=1$.
Now for $k\ge 2$, Since $\mu$ is a function with respect to the variable $v$, we have
$$
t^k\partial^k_t\mathcal{L}f=\mathcal{L}(t^k\partial^k_tf).
$$
Then by \eqref{1-2}, one can obtain,
\begin{equation*}
\begin{split}
        \partial_t (t^k\partial^k_t f)&+\mathcal{L}_1(t^k\partial^k_t f)
        =k t^{k-1}\partial^k_t f-\mathcal{L}_2(t^k\partial^k_t f)+\Gamma(f, t^k\partial^k_t f)\\
        &+\Gamma(t^k\partial^k_t f, f)+\sum_{1\le j\le k-1}C^j_k\ \Gamma(t^{j}\partial^{j}_tf, t^{k-j}\partial^{k-j}_t f),
\end{split}
\end{equation*}
where $C^j_k=\frac{k!}{j!(k-j)!}$. Then taking the $L^2(\mathbb{R}^3)$ inner product of both sides with respect to $t^k\partial^k_tf$, we have
\begin{equation*}
\begin{split}
&\frac{1}{2}\frac{d}{dt}\|t^k\partial^k_tf\|_{L^2}^2+(\mathcal{L}_1(t^{k}\partial^k_tf), t^{k}\partial^k_tf)_{L^2}\\
&\qquad \qquad =kt^{2k-1}\|\partial^k_tf\|_{L^2}^2-(\mathcal{L}_2(t^{k}\partial^k_tf), t^{k}\partial^k_tf)_{L^2}\\
&\qquad \qquad +(\Gamma(f, t^k\partial^k_tf), t^k\partial^k_tf)_{L^2} +(\Gamma(t^k\partial^k_tf, f), t^k\partial^k_tf)_{L^2}
\\
&\qquad \qquad +\sum_{1\le j\le k-1}C^j_k\ \Gamma(t^{j}\partial^{j}_tf, t^{k-j}\partial^{k-j}_t f), t^k\partial^k_tf)_{L^2}.
\end{split}
\end{equation*}
For all $0< t\le T$, integrating from 0 to $t$, using lemma \ref{lemma2} and \eqref{interpolation} with $\delta=\frac 1{8C_2}$, we can conclude
\begin{equation*}
\begin{split}
        \int^t_0\tau^{2k}(\mathcal{L}_1(\partial^k_\tau f),\partial^k_\tau f)_{L^2}d\tau&\ge\|\tau^k\partial^k_\tau f\|^2_{L^2([0,t];L^2_A)}-C_2\int_0^t\tau^{2k}\|\partial^k_\tau f\|^2_{2,\gamma/2}d\tau\\
        &\ge\frac{7}{8}\|\tau^k\partial^k_\tau f\|^2_{L^2([0,t];L^2_A)}-\tilde C_2 \int_0^t\tau^{2k}\|\partial^k_\tau f\|^2_{L^2}d\tau.
\end{split}
\end{equation*}
Using corollary \ref{corollary}, for all $0\le t\le T$, we have
\begin{equation*}
\begin{split}
       &\left|\int^t_0\tau^{2k}(\mathcal{L}_2(\partial^k_\tau f),\partial^k_\tau f)_{L^2}d\tau\right|\le C_4\int^t_0\tau^{2k}\|\partial^k_\tau f\|_{L^2}\|\partial^k_\tau f\|_{L^2_A}d\tau\\
        &\qquad \le\frac{1}{8}\|\tau^k\partial^k_\tau f\|^2_{L^2([0,t];L^2_A)}+2C_4^2\int_0^t\tau^{2k}\|\partial^k_\tau f\|^2_{L^2}d\tau.
\end{split}
\end{equation*}
Finally, using  lemma \ref{lemma3}, we have
\begin{equation}\label{4-1+1}
\begin{split}
&\frac{1}{2}\|t^k\partial^k_t f\|_{L^2}^2+\frac{3}{4}\int^t_0\|\tau^k\partial^k_\tau f\|^2_{L^2_A}d\tau\\
&\qquad \le
k\int^t_0\tau^{2k-1}\|\partial^k_\tau f\|_{L^2}^2d\tau+\tilde C_3 \int_0^t\|\tau^{k}\partial^k_\tau f\|^2_{L^2}d\tau\\
        &\qquad\qquad +C_3\sum_{0\le j\le k}C^j_k\ \int^t_0\|\tau^{j}\partial^{j}_\tau f\|_{L^2}\|\tau^{k-j}\partial^{k-j}_\tau f\|_{L^2_A}\|\tau^k\partial^k_\tau f\|_{L^2_A}d\tau,
\end{split}
\end{equation}
with $\tilde C_3=\tilde C_2+2C_4^2$ depends only on $C_1, C_2, C_3, C_4$ and $T$.

We prove now \eqref{3-1} by induction on $k$.  Assume that for $k\ge 2$, \eqref{3-1} holds true for $1\le m\le k-1$,
    \begin{equation}\label{4-3+0}
       \|\tau^m\partial^m_\tau f\|_{L^\infty([0, T];L^2(\mathbb R^3))}^2+\|\tau^m\partial^m_\tau f\|^2_{L^2([0, T]; L^2_A(\mathbb R^3))}\le B^{2(m-1)}((m-2)!)^2.
    \end{equation}
And we shall prove that \eqref{3-1} holds true for $m=k$. We estimate the terms of right hand side of \eqref{4-1+1} by the following lemmas.

\begin{lemma}\label{lemma4-1}
Assume that \eqref{4-3+0} holds true for any $1\le m\le k-1$, and $f$ satisfies \eqref{3-1+0}, then
\begin{equation}\label{4-3+1}
k\int^t_0\tau^{2k-1}\|\partial^k_\tau f\|_{L^2}^2d\tau \le\frac{1}{8}\|\tau^k\partial^k_\tau f\|^2_{L^2([0,t];L^2_A)}+A_1 B^{2(k-2)}((k-2)!)^2,
\end{equation}
with $A_1$ depends only on $C_1, C_2, C_3, C_4$ and $T$.
\end{lemma}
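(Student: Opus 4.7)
The plan is to substitute the equation \eqref{1-2} (differentiated via Leibniz) into one copy of $\partial_\tau^k f$ in $\|\partial_\tau^k f\|_{L^2}^2=(\partial_\tau^k f,\partial_\tau^k f)_{L^2}$. Since $\partial_\tau f=\Gamma(f,f)-\mathcal{L}f$, this gives
$$\partial_\tau^k f=-\mathcal{L}(\partial_\tau^{k-1}f)+\sum_{j=0}^{k-1}\binom{k-1}{j}\Gamma(\partial_\tau^j f,\partial_\tau^{k-1-j}f),$$
and I would distribute the weight $\tau^{2k-1}=\tau^{k-1}\cdot\tau^k$ on the $\mathcal{L}$ piece and $\tau^{2k-1}=\tau^j\cdot\tau^{k-1-j}\cdot\tau^k$ on the $j$-th Leibniz $\Gamma$ piece, using that $\mathcal{L},\Gamma$ act only in $v$. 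Corollary \ref{corollary} and Lemma \ref{lemma3} then bound each integrand so that one factor $\|\tau^k\partial_\tau^k f\|_{L^2_A}$ always appears linearly, ready to be absorbed into the $\tfrac18$-term on the right-hand side by Young's inequality.

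For the $\mathcal{L}$ piece, H\"older in $\tau$ followed by Young with parameter $\delta$ yields a remainder bounded by $\tfrac{k^2 C_4^2(1+T)}{2\delta}\bigl(\|\tau^{k-1}\partial_\tau^{k-1}f\|^2_{L^\infty L^2}+\|\tau^{k-1}\partial_\tau^{k-1}f\|^2_{L^2 L^2_A}\bigr)$. The induction hypothesis \eqref{4-3+0} at $m=k-1$ bounds this by $\tfrac{k^2 C_4^2(1+T)}{2\delta}B^{2(k-2)}((k-3)!)^2$, and the elementary inequality $k(k-3)!\le 3(k-2)!$ for $k\ge 3$ (with $k=2$ trivial) gives $k^2((k-3)!)^2\le 9((k-2)!)^2$, producing a contribution $A_1'B^{2(k-2)}((k-2)!)^2$ with $A_1'$ independent of $k$ and $B$. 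For the $\Gamma$ piece, Lemma \ref{lemma3} together with the H\"older split $\|\cdot\|_{L^\infty L^2}\cdot\|\cdot\|_{L^2 L^2_A}\cdot\|\cdot\|_{L^2 L^2_A}$ and Young's inequality leaves the expression $\tfrac{k^2 C_3^2\widetilde S^2}{4\delta}$, with
$$\widetilde S=\sum_{j=0}^{k-1}\binom{k-1}{j}\|\tau^j\partial_\tau^j f\|_{L^\infty L^2}\|\tau^{k-1-j}\partial_\tau^{k-1-j}f\|_{L^2 L^2_A}.$$
Using \eqref{3-1+0} at the endpoints $j\in\{0,k-1\}$ and \eqref{4-3+0} for $1\le j\le k-2$ (with conventions $(-1)!=0!=1$), one gets $\widetilde S\le 2B^{k-2}(k-3)!+B^{k-3}\Sigma_k$ with $\Sigma_k:=\sum_{j=1}^{k-2}\binom{k-1}{j}(j-2)!(k-3-j)!$.

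The main obstacle is establishing the combinatorial bound $k\widetilde S\le C'B^{k-2}(k-2)!$, so that the $k^2$ prefactor is absorbed into the factorials. The endpoint contribution is immediate: $k\cdot 2B^{k-2}(k-3)!\le 6B^{k-2}(k-2)!$. For the interior, the identity
$$\binom{k-1}{j}(j-2)!(k-3-j)!=\frac{(k-1)!}{j(j-1)(k-1-j)(k-2-j)}\qquad(2\le j\le k-3)$$
together with a partial-fraction decomposition of the rational factor (in terms of the harmonic sums $H_{k-3},H_{k-4}$) yields $\Sigma_k\le C(k-2)!/k$, so $k\cdot B^{k-3}\Sigma_k\le CB^{k-3}(k-2)!\le CB^{k-2}(k-2)!$ for $B\ge 1$. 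Squaring gives $(k\widetilde S)^2\le C'^2 B^{2(k-2)}((k-2)!)^2$, so the $\Gamma$ contribution is $\le \tfrac{C'^2 C_3^2}{4\delta}B^{2(k-2)}((k-2)!)^2$. Summing the $\mathcal{L}$ and $\Gamma$ bounds and choosing $\delta$ small enough that the total coefficient in front of $\|\tau^k\partial_\tau^k f\|^2_{L^2(L^2_A)}$ does not exceed $\tfrac18$ produces \eqref{4-3+1} with $A_1$ depending only on $C_1,C_2,C_3,C_4$ and $T$.
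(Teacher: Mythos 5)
Your proposal is correct and follows essentially the same route as the paper: substitute the $(k-1)$-times differentiated equation for one copy of $\partial_\tau^k f$, split into the $\mathcal{L}$ and Leibniz--$\Gamma$ contributions, absorb $\|\tau^k\partial_\tau^k f\|_{L^2([0,t];L^2_A)}$ by Young's inequality, and control the binomial--factorial sum $\sum_j \frac{k(k-1)}{j(j-1)(k-1-j)(k-2-j)}$ by a uniform constant. The only (immaterial) difference is that you bound this combinatorial sum via partial fractions and harmonic sums, whereas the paper splits it at $j\approx k/2$ and uses $\sum 1/(j(j-1))\le 1$; both yield the same $O(1)$ bound.
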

We have also
\begin{lemma}\label{lemma4-2}
Assume that \eqref{4-3+0} holds true for any $1\le m\le k-1$, then
\begin{equation}\label{4-3+2}
\begin{split}
C_3\sum_{1\le j\le k-1}C^j_k\ \int^t_0\|\tau^{j}\partial^{j}_\tau f\|_{L^2}\|\tau^{k-j}\partial^{k-j}_\tau f\|_{L^2_A}\|\tau^k\partial^k_\tau f\|_{L^2_A}d\tau\\
\le \frac{1}{8}\|\tau^k\partial^k_\tau f\|^2_{L^2([0,t];L^2_A)}+A_2 B^{2(k-2)}((k-2)!)^2,
\end{split}
\end{equation}
with $A_2$ depends only on $C_1, C_2, C_3, C_4$ and $T$.
\end{lemma}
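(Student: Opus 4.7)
The strategy is to isolate the ``top-order'' factor $\|\tau^k\partial^k_\tau f\|_{L^2_A}$ from each summand so that it can be absorbed into the left-hand side of \eqref{4-1+1} via Young's inequality, estimate the remaining two factors $\|\tau^{j}\partial^{j}_\tau f\|_{L^2}$ and $\|\tau^{k-j}\partial^{k-j}_\tau f\|_{L^2_A}$ by the inductive hypothesis \eqref{4-3+0} applied at the smaller indices $j$ and $k-j$ (both in $\{1,\ldots,k-1\}$), and then control the resulting combinatorial sum
$$
S_k \;:=\; \sum_{j=1}^{k-1}\binom{k}{j}(j-2)!\,(k-j-2)!
$$
by a constant multiple of $(k-2)!$, using the convention $(-1)!=0!=1$.

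The first step is H\"older's inequality in time: for each $1\le j\le k-1$,
\begin{equation*}
\int_0^t\|\tau^{j}\partial^{j}_\tau f\|_{L^2}\|\tau^{k-j}\partial^{k-j}_\tau f\|_{L^2_A}\|\tau^k\partial^k_\tau f\|_{L^2_A}\,d\tau
\le \|\tau^{j}\partial^{j}_\tau f\|_{L^\infty([0,t];L^2)}\,\|\tau^{k-j}\partial^{k-j}_\tau f\|_{L^2([0,t];L^2_A)}\,\|\tau^k\partial^k_\tau f\|_{L^2([0,t];L^2_A)}.
\end{equation*}
Applying \eqref{4-3+0} with $m=j$ to the first factor and with $m=k-j$ to the second gives $B^{j-1}(j-2)!\cdot B^{k-j-1}(k-j-2)!=B^{k-2}(j-2)!(k-j-2)!$. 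Summing over $j$ with weight $C_3\binom{k}{j}$ and pulling the common factor $\|\tau^k\partial^k_\tau f\|_{L^2([0,t];L^2_A)}$ outside produces a bound of the form
$$
C_3\,B^{k-2}\,S_k\,\|\tau^k\partial^k_\tau f\|_{L^2([0,t];L^2_A)},
$$
and Young's inequality $ab\le \tfrac{1}{2\delta}a^2+\tfrac{\delta}{2}b^2$ with $\delta$ chosen so that the $b^2$-piece equals $\tfrac18\|\tau^k\partial^k_\tau f\|^2_{L^2([0,t];L^2_A)}$ leaves a residual of the form $2C_3^2\,S_k^2\,B^{2(k-2)}$.

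The crux is therefore the combinatorial estimate $S_k\le C_\ast(k-2)!$ with $C_\ast$ independent of $k$. For $2\le j\le k-2$ one rewrites
$$
\binom{k}{j}(j-2)!\,(k-j-2)!\;=\;\frac{k!}{j(j-1)(k-j)(k-j-1)},
$$
uses $\tfrac{1}{j(j-1)}\le \tfrac{2}{j^2}$ and $\tfrac{1}{(k-j)(k-j-1)}\le\tfrac{2}{(k-j)^2}$, and splits the sum at $j=k/2$ to obtain $\sum_{j=2}^{k-2}\frac{1}{j^2(k-j)^2}\le \frac{C'}{k^2}$; after multiplying by $k!$ the interior contribution is at most $C(k-2)!$. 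The two boundary indices $j=1$ and $j=k-1$ each contribute $k(k-3)!\le 3(k-2)!$, so $S_k\le C_\ast(k-2)!$ with $C_\ast$ absolute. Setting $A_2=2C_3^2\,C_\ast^2$ then yields \eqref{4-3+2} with $A_2$ depending only on $C_1,C_2,C_3,C_4$ and $T$.

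The main obstacle I expect is precisely this combinatorial bound: a naive estimate such as $(j-2)!(k-j-2)!\le (k-4)!$ paired with $\sum_{j}\binom{k}{j}\le 2^k$ would introduce a spurious factor $2^k$ and wreck the analytic rate. One must genuinely exploit the cancellation between $\binom{k}{j}$ and the two factorials (i.e., that $k!/[j(j-1)(k-j)(k-j-1)]$ concentrates near the two endpoints and decays like $k!/k^2$ in the middle of the range) in order to keep $S_k$ at the order $(k-2)!$; the other ingredients — H\"older, Young, and the direct plug-in of the induction hypothesis — are routine.
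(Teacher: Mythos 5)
Your proposal is correct and follows essentially the same route as the paper: H\"older in time, insertion of the inductive bound $B^{m-1}(m-2)!$ for both lower-order factors, absorption of the top-order factor by Young's inequality with the $\tfrac18$ weight, and a midpoint-splitting argument showing that $\sum_j \binom{k}{j}(j-2)!\,(k-j-2)!\lesssim (k-2)!$ (the paper packages this last step as its Lemma 5.1, bounding $\sum_{2\le j\le k-3}\frac{k(k-1)}{j(j-1)(k-j-1)(k-j-2)}$ by $12$ via the telescoping sum $\sum 1/(j(j-1))$, while you use $\sum 1/j^2$; the two are interchangeable). Your explicit handling of the boundary indices $j=1,\,k-1$ matches the paper's ``$+6$'' term, so no gap remains.
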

And
\begin{lemma}\label{lemma4-3}
Assume that $f$ satisfies \eqref{3-1+0}, then, for $0<t\le T$,
\begin{equation}\label{4-3+3}
\begin{split}
&C_3\ \int^t_0\|\tau^{k}\partial^{k}_\tau f\|_{L^2}\| f\|_{L^2_A}\|\tau^k\partial^k_\tau f\|_{L^2_A}d\tau\\
&\le \frac{1}{8}\|\tau^k\partial^k_\tau f\|^2_{L^2([0,t];L^2_A)}+2C^2_3 B^2_0\epsilon^2\
\|\tau^{k}\partial^{k}_\tau f\|^2_{L^\infty([0, t]; L^2)},
\end{split}
\end{equation}
and
\begin{equation}\label{4-3+4}
\ \int^t_0\| f\|_{L^2}\|\tau^{k}\partial^{k}_\tau f\|^2_{L^2_A}d\tau\\
\le B_0\epsilon\|\tau^k\partial^k_\tau f\|^2_{L^2([0,t];L^2_A)}.
\end{equation}
\end{lemma}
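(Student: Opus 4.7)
The plan is to treat the two inequalities \eqref{4-3+3} and \eqref{4-3+4} separately, each via a direct application of Young's inequality (or Cauchy--Schwarz) combined with the energy bound \eqref{3-1+0} coming from Lemma \ref{lemma-1}. No induction hypothesis is needed here, which is what distinguishes this lemma from Lemma \ref{lemma4-1} and Lemma \ref{lemma4-2}: both terms involve the top-order derivative $\tau^k\partial_\tau^k f$ twice as a factor, so they can be absorbed entirely into the quantities appearing on the left-hand side of \eqref{4-1+1} using only the smallness coming from $\|f\|_{L^\infty([0,T];L^2)}\le B_0\epsilon\le 1$ and $\|f\|_{L^2([0,T];L^2_A)}\le B_0\epsilon$.

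For \eqref{4-3+3}, I would apply the Young inequality $xy\le 2x^2+\tfrac{1}{8}y^2$ pointwise in $\tau$ with $x=C_3\|\tau^k\partial_\tau^k f\|_{L^2}\|f\|_{L^2_A}$ and $y=\|\tau^k\partial_\tau^k f\|_{L^2_A}$, which gives
$$
C_3\|\tau^k\partial_\tau^k f\|_{L^2}\|f\|_{L^2_A}\|\tau^k\partial_\tau^k f\|_{L^2_A}\le \tfrac{1}{8}\|\tau^k\partial_\tau^k f\|_{L^2_A}^2+2C_3^2\|\tau^k\partial_\tau^k f\|_{L^2}^2\|f\|_{L^2_A}^2.
$$
Integrating in $\tau\in[0,t]$, the first term on the right produces $\tfrac{1}{8}\|\tau^k\partial_\tau^k f\|^2_{L^2([0,t];L^2_A)}$. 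In the second term, I would pull $\|\tau^k\partial_\tau^k f\|_{L^2}^2$ out of the $\tau$-integral in $L^\infty$ norm and then use $\int_0^t\|f\|^2_{L^2_A}\,d\tau\le B_0^2\epsilon^2$ from \eqref{3-1+0}; this is exactly the factor $2C_3^2B_0^2\epsilon^2\|\tau^k\partial_\tau^k f\|^2_{L^\infty([0,t];L^2)}$ stated in \eqref{4-3+3}.

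For \eqref{4-3+4}, I would simply pull $\|f\|_{L^2}$ out of the $\tau$-integral in $L^\infty$ norm and use $\|f\|_{L^\infty([0,T];L^2)}\le B_0\epsilon$ from \eqref{3-1+0}, i.e.
$$
\int_0^t\|f\|_{L^2}\|\tau^k\partial_\tau^k f\|^2_{L^2_A}\,d\tau\le \|f\|_{L^\infty([0,t];L^2)}\int_0^t\|\tau^k\partial_\tau^k f\|^2_{L^2_A}\,d\tau\le B_0\epsilon\|\tau^k\partial_\tau^k f\|^2_{L^2([0,t];L^2_A)}.
$$

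I do not expect any real obstacle here, since both estimates are elementary applications of Young/Cauchy--Schwarz, and the crucial control on $\|f\|_{L^2}$ and $\|f\|_{L^2_A}$ is delivered by Lemma \ref{lemma-1}. The subtle point is only the bookkeeping: the prefactor $\tfrac{1}{8}$ on the $L^2([0,t];L^2_A)$ term and the smallness factor $B_0\epsilon$ in \eqref{4-3+4} are calibrated so that, when the output of Lemma \ref{lemma4-3} is inserted back into \eqref{4-1+1} alongside Lemmas \ref{lemma4-1} and \ref{lemma4-2}, all top-order terms on the right can be absorbed into the coercive term $\tfrac{3}{4}\|\tau^k\partial_\tau^k f\|^2_{L^2([0,t];L^2_A)}$ on the left, leaving a clean recursion in $k$ with only the contribution $A_1+A_2$ times $B^{2(k-2)}((k-2)!)^2$ from the lower-order derivatives.
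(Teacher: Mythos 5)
Your proof is correct and follows essentially the same route as the paper: Young's inequality with weights $\tfrac18$ and $2C_3^2$ for \eqref{4-3+3}, then the bounds $\int_0^t\|f\|^2_{L^2_A}\,d\tau\le B_0^2\epsilon^2$ and $\|f\|_{L^\infty([0,T];L^2)}\le B_0\epsilon$ from \eqref{3-1+0}. The bookkeeping of the constants matches the lemma statement exactly.
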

We will give the proofs of these three lemmas in the next section.

\bigskip
\noindent
{\bf End of proof of Proposition \ref{Proposition4-1} }

Choose $0<\epsilon<1$ small such that
$$
C_3 B_0\epsilon\le \frac{1}{8},\quad 2C^2_3 B^2_0\epsilon^2\le \frac 14.
$$
Assume that \eqref{4-3+0} holds true for any $1\le m\le k-1$, and $f$ satisfies \eqref{3-1+0}, then combine \eqref{4-1+1}, \eqref{4-3+1}, \eqref{4-3+2},\eqref{4-3+3},\eqref{4-3+4}, we get, for $0<t\le T$,
\begin{equation*}
\begin{split}
&\|t^k\partial^k_t f\|_{L^2}^2+\int^t_0\|\tau^k\partial^k_\tau f\|^2_{L^2_A}d\tau\\
&\qquad \le
4(A_1+A_2) (B^{k-2}(k-2)!)^2+4\tilde C_3 \int_0^t\|\tau^{k}\partial^k_\tau f\|^2_{L^2}d\tau,
\end{split}
\end{equation*}
with $\tilde C_3$ depends only on $C_1, C_2, C_3, C_4$ and $T$.
By using Gronwall inequality, we get for $0<t\le T$,
\begin{equation*}
\|t^k\partial^k_t f\|_{L^2}^2 \le 4 e^{4\tilde C_3 T}(A_1+A_2) B^{2(k-2)}((k-2)!)^2,
\end{equation*}
which deduce
\begin{align*}
&\|\tau^k\partial^k_\tau f\|_{L^\infty ([0, T], L^2)}^2+\|\tau^k\partial^k_\tau f\|^2_{L^2([0, T], L^2_A)}\\
&\qquad \le
4( e^{4\tilde C_3 T}\ 4\tilde C_3 T+1)(A_1+A_2) B^{2(k-2)}((k-2)!)^2.
\end{align*}
We prove then
\begin{equation*}
\|\tau^k\partial^k_\tau f\|_{L^\infty ([0, T], L^2)}^2+\|\tau^k\partial^k_\tau f\|^2_{L^2([0, T], L^2_A)}
\le B^{2(k-1)}((k-2)!)^2,
\end{equation*}
if we choose the constant $B$ such that
$$
4(e^{4\tilde C_3 T} \ 4\tilde C_3 T+1)(A_1+A_2) \le B^2,
$$
so that the constant $B$ depends only on $C_1, C_2, C_3, C_4, T$ and small $\epsilon$. We finish the proof of proposition \ref{Proposition4-1}.

\section{Proofs of technical Lemmas}
Before give the proof of lemma \ref{lemma4-1}, lemma \ref{lemma4-2} and lemma \ref{lemma4-3},
we need the following lemma
\begin{lemma}
    For all $k\in\mathbb N, k\ge5$, we have
\begin{equation}\label{summation}
\begin{split}
    \sum_{2\le j\le k-3}\frac{k(k-1)}{j(j-1)(k-j-1)(k-j-2)}\le 12.
\end{split}
\end{equation}
\end{lemma}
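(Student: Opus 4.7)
The plan is to exploit the symmetry of the summand under the involution $j\mapsto k-1-j$. This map sends the quadruple $(j,\,j-1,\,k-j-1,\,k-j-2)$ to $(k-j-1,\,k-j-2,\,j,\,j-1)$, so the summand is invariant under it; in particular, the full sum over $2\le j\le k-3$ is bounded by twice the partial sum over $2\le j\le \lfloor (k-1)/2\rfloor$ (the middle term, when $k$ is odd, is counted twice, which only weakens the inequality).

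For $j$ in the reduced range one has $k-j-1\ge (k-1)/2$ and $k-j-2\ge (k-3)/2$, hence
$$
(k-j-1)(k-j-2)\ge \tfrac14(k-1)(k-3),
$$
and the summand is dominated by $\dfrac{4k}{(k-3)\,j(j-1)}$. Combining this with the elementary telescoping identity
$$
\sum_{j\ge 2}\frac{1}{j(j-1)}\;=\;\sum_{j\ge 2}\Bigl(\frac{1}{j-1}-\frac{1}{j}\Bigr)\;=\;1,
$$
I obtain the clean uniform bound
$$
\sum_{2\le j\le k-3}\frac{k(k-1)}{j(j-1)(k-j-1)(k-j-2)}\;\le\; \frac{8k}{k-3}.
$$
A direct algebraic check shows $\dfrac{8k}{k-3}\le 12$ precisely when $k\ge 9$, so \eqref{summation} is established on that range.

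The main obstacle is therefore not the asymptotic regime but the four small residual values $k\in\{5,6,7,8\}$, where the uniform estimate $8k/(k-3)$ overshoots $12$. I would simply dispatch these by direct evaluation; a straightforward calculation gives the values $5,\;5,\;14/3,\;196/45$ respectively, each comfortably below $12$. Putting the two pieces together yields \eqref{summation} for every $k\ge 5$.
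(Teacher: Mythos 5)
Your proof is correct and follows essentially the same route as the paper's: split the sum at $j\approx(k-1)/2$, bound the two distant factors below by roughly $(k-1)(k-3)/4$, and invoke the telescoping identity $\sum_{j\ge2}1/(j(j-1))=1$. The only difference is how the estimate is closed --- you derive the uniform bound $8k/(k-3)$, which forces a direct check of $k=5,\dots,8$, whereas the paper asserts per-half constants $4$ and $8$ outright; your variant is in fact the more careful one, since the paper's intermediate per-term bound $k(k-1)/(j(j-1))\le 8$ fails at $k=5$, $j=2$ (where it equals $10$), even though its final conclusion still holds.
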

\begin{proof}
   Without loss of generality, we may assume $k-1$ is even, then the summation can be rewritten as
   \begin{equation*}
   \begin{split}
      \sum_{2\le j\le \frac{k-3}{2}}\frac{k(k-1)}{j(j-1)(k-1-j)(k-2-j)}+\sum_{\frac{k-1}{2}\le j\le k-3}\frac{k(k-1)}{j(j-1)(k-1-j)(k-2-j)}.
    \end{split}
    \end{equation*}
    For the first term in above, since $j\le\frac{k-3}{2}$, we have $k-j\ge\frac{k+3}{2}$. Then it follows that
    \begin{equation*}
      \sum_{2\le j\le \frac{k-3}{2}}\frac{k(k-1)}{j(j-1)(k-1-j)(k-2-j)}\le\sum_{2\le j\le \frac{k-3}{2}}\frac{4}{j(j-1)}\le4.
    \end{equation*}
    For the second term, by $j\ge\frac{k-1}{2}$, we have
    \begin{equation*}
      \sum_{\frac{k-1}{2}\le j\le k-3}\frac{k(k-1)}{j(j-1)(k-1-j)(k-2-j)}\le\sum_{\frac{k-1}{2}\le j\le k-3}\frac{8}{(k-1-j)(k-2-j)}\le8.
    \end{equation*}
    Thus \eqref{summation} holds true.
\end{proof}

\noindent
{\bf Proof of Lemma \ref{lemma4-1} }

For $k\ge2$, by \eqref{1-2}, one has
\begin{align*}
\partial_t^k f&=\partial_t (\partial_t^{k-1}f)=-\mathcal L(\partial_t^{k-1} f)
+\partial_t^{k-1} \Gamma(f, f)\\
&=-\mathcal L(\partial_t^{k-1} f)+\sum_{0\le j\le k-1}C^j_{k-1}\Gamma(\partial^j_tf,\partial^{k-1-j}_tf).
\end{align*}
Then we have
\begin{equation*}
\begin{split}
     k\int_0^t\tau^{2k-1}\|\partial_\tau^k f\|^2_{L^2}d\tau=&k\sum_{0\le j\le k-1}C^j_{k-1}\int_0^t\tau^{2k-1}(\Gamma(\partial^j_\tau f,\partial^{k-1-j}_\tau f),\partial_\tau^{k} f)_{L^2}\\
     &-k\int_0^t\tau^{2k-1}(\mathcal L(\partial_\tau^{k-1} f),\partial_\tau^{k} f)_{L^2}d\tau\\
     =&S_1+S_2.
\end{split}
\end{equation*}
Using lemma \ref{lemma3}, we can conclude
\begin{equation*}
\begin{split}
    |S_1|&\le C_3k\sum_{0\le j\le k-1}C^j_{k-1}\int_0^t\|\tau^j\partial^j_\tau f\|_{L^2}\|\tau^{k-1-j}\partial^{k-1-j}_\tau f\|_{L^2_A}\|\tau^k\partial^k_\tau f\|_{L^2_A}d\tau\\
    &\le4C_3^2k^2\left(\sum_{0\le j\le k-1}C^j_{k-1}\|\tau^j\partial^j_\tau f\|_{L^\infty([0,t];L^2)}\|\tau^{k-1-j}\partial^{k-1-j}_\tau f\|_{L^2([0,t];L^2_A)}\right)^2\\
    &\qquad+\frac{1}{16}\|\tau^k\partial^k_\tau f\|^2_{L^2([0,t];L^2_A)}.
\end{split}
\end{equation*}
From \eqref{4-3+0}, one can obtain
\begin{equation}\label{5-1}
\begin{split}
    &\sum_{0\le j\le k-1}C^j_{k-1}\|\tau^j\partial^j_\tau f\|_{L^\infty([0,t];L^2)}\|\tau^{k-1-j}\partial^{k-1-j}_\tau f\|_{L^2([0,t];L^2_A)}\\
    &\le\sum_{0\le j\le k-1}C^j_{k-1}B^{j-1}(j-2)!B^{k-2-j}(k-3-j)!\\
    &\le B^{k-3}(k-3!)\left(\sum_{2\le j\le k-3}\frac{k(k-1)}{j(j-1)(k-1-j)(k-2-j)}+6\right).
\end{split}
\end{equation}
Substituting \eqref{summation} into \eqref{5-1} we get
\begin{equation*}
\begin{split}
    &\sum_{0\le j\le k-1}C^j_{k-1}\|\tau^j\partial^j_\tau f\|_{L^\infty([0,t];L^2)}\|\tau^{k-1-j}\partial^{k-1-j}_\tau f\|_{L^2([0,t];L^2_A)}\le18B^{k-3}(k-3)!,
\end{split}
\end{equation*}
from which we can conclude
\begin{equation*}
    |S_1|\le\frac{1}{16}\|\tau^k\partial^k_\tau f\|^2_{L^2([0,t];L^2_A)}+C_6\left(B^{k-3}(k-2)!\right)^2,
\end{equation*}
with $C_6$ depends only on $C_1, C_2, C_3, C_4$ and $T$, where we use $\frac{k}{k-2}\le 3$.

For the term $S_2$ ,using corollary \ref{corollary} and \eqref{4-3+0}, we have
\begin{equation*}
\begin{split}
   |S_2|&\le C_4k\int_0^t\left(\|\tau^{k-1}\partial^{k-1}_\tau f\|_{L^2}+\|\tau^{k-1}\partial^{k-1}_\tau f\|_{L^2_A}\right)\|\tau^{k}\partial^{k}_\tau f\|_{L^2_A}d\tau\\
   &\le4C_4^2k^2\left(T\|\tau^{k-1}\partial^{k-1}_\tau f\|^2_{L^\infty([0,t];L^2)}+\|\tau^{k-1}\partial^{k-1}_\tau f\|^2_{L^2([0,t];L^2_A)}\right)\\
   &\qquad+\frac{1}{16}\|\tau^k\partial^k_\tau f\|^2_{L^2([0,t];L^2_A)}\\
   &\le4C_4^2k^2(T+1)\left(B^{k-2}(k-3)!\right)^2+\frac{1}{16}\|\tau^k\partial^k_\tau f\|^2_{L^2([0,t];L^2_A)}\\
   &\le C_7\left(B^{k-2}(k-2)!\right)^2+\frac{1}{16}\|\tau^k\partial^k_\tau f\|^2_{L^2([0,t];L^2_A)},
\end{split}
\end{equation*}
with $C_7$ depends only on $C_1, C_2, C_3, C_4$ and $T$.

Taking $A_1=C_6+C_7$, so that $A_1$ depends only on $C_1, C_2, C_3, C_4$ and $T$, then combining $S_1$ and $S_2$, we get
\begin{equation*}
    k\int_0^t\tau^{2k-1}\|\partial^k_\tau f\|^2_{L^2}d\tau\le\frac{1}{8}\|\tau^k\partial^k_\tau f\|^2_{L^2([0,t];L^2_A)}+A_1\left(B^{k-2}(k-2)!\right)^2.
\end{equation*}

\vspace{1cm}
\noindent
{\bf Proof of Lemma \ref{lemma4-2} }

Using H$\ddot{\rm o}$lder's inequality and \eqref{4-3+0}, we have
\begin{equation*}
\begin{split}
    &C_3\sum_{1\le j\le k-1}C^j_k\int_0^t\|\tau^j\partial^j_\tau f\|_{L^2}\|\tau^{k-j}\partial^{k-j}_\tau f\|_{L^2_A}\|\tau^{k}\partial^{k}_\tau f\|_{L^2_A}d\tau\\
    &\le2C_3^2\left(\sum_{1\le j\le k-1}C^j_k\|\tau^j\partial^j_\tau f\|_{L^\infty([0,t];L^2)}\|\tau^{k-j}\partial^{k-j}_\tau f\|_{L^2([0,t];L^2_A)}\right)^2\\
    &\qquad+\frac{1}{8}\|\tau^{k}\partial^{k}_\tau f\|^2_{L^2([0,t];L^2_A)}\\
    &\le2C_3^2\left(B^{k-2}(k-2)!\right)^2\left(\sum_{2\le j\le k-3}\frac{k(k-1)}{j(j-1)(k-j)(k-j-1)}+6\right)^2\\
    &\qquad+\frac{1}{8}\|\tau^{k}\partial^{k}_\tau f\|^2_{L^2([0,t];L^2_A)}\\
    &\le2C_3^2\left(B^{k-2}(k-2)!\right)^2\left(\sum_{2\le j\le k-3}\frac{k(k-1)}{j(j-1)(k-j-1)(k-j-2)}+6\right)^2\\
    &\qquad+\frac{1}{8}\|\tau^{k}\partial^{k}_\tau f\|^2_{L^2([0,t];L^2_A)}.
\end{split}
\end{equation*}
Then from \eqref{summation}, we can get
\begin{equation*}
\begin{split}
   &C_3\sum_{1\le j\le k-1}C^j_k\int_0^t\|\tau^j\partial^j_\tau f\|_{L^2}\|\tau^{k-j}\partial^{k-j}_\tau f\|_{L^2_A}\|\tau^{k}\partial^{k}_\tau f\|_{L^2_A}d\tau\\
   &\le\frac{1}{8}\|\tau^{k}\partial^{k}_\tau f\|^2_{L^2([0,t];L^2_A)}+A_2\left(B^{k-2}(k-2)!\right)^2,
\end{split}
\end{equation*}
with $A_2$ depends only on $C_1, C_2, C_3, C_4$ and $T$.

\vspace{1cm}

\noindent
{\bf Proof of Lemma \ref{lemma4-3} }

For the inequality \eqref{4-3+3}, using H$\ddot{\rm o}$lder's inequality and \eqref{3-1+0}, one has
\begin{equation*}
\begin{split}
    &C_3\int_0^t\|\tau^k\partial^k_\tau f\|_{L^2}\|f\|_{L^2_A}\|\tau^k\partial^k_\tau f\|_{L^2_A}d\tau\\
    &\le\frac{1}{8}\|\tau^k\partial^k_\tau f\|^2_{L^2([0,t];L^2_A)}+2C_3^2\|\tau^k\partial^k_\tau f\|^2_{L^\infty([0,t];L^2)}\int_0^t\|f\|^2_{L^2_A}d\tau\\
    &\le\frac{1}{8}\|\tau^k\partial^k_\tau f\|^2_{L^2([0,t];L^2_A)}+2C_3^2B_0^2\epsilon^2.
\end{split}
\end{equation*}
For the inequality \eqref{4-3+4}, the inequality \eqref{3-1+0} implies
\begin{equation*}
\begin{split}
    \int_0^t\|f\|_{L^2}\|\tau^k\partial^k_\tau f\|^2_{L^2_A}d\tau&\le \|f\|_{L^\infty([0,T];L^2)}\|\tau^k\partial^k_\tau f\|^2_{L^2([0,t];L^2_A)}\\
    &\le B_0\epsilon\|\tau^k\partial^k_\tau f\|^2_{L^2([0,t];L^2_A)}.
\end{split}
\end{equation*}

\bigskip
\noindent {\bf Acknowledgements.}
This work was supported by the NSFC (No.12031006) and the Fundamental
Research Funds for the Central Universities of China.

\end{document}